\title{Dynamical properties of disjunctive Boolean networks} 
\author{Maximilien Gadouleau}
\newcommand{\Z}{\mathbb{Z}}
\newcommand{\B}{\mathbb{B}}
\newcommand{\inNeighbourhood}{N^{\mathrm{in}}}
\newcommand{\outNeighbourhood}{N^{\mathrm{out}}}
\newcommand{\InteractionGraph}{\mathbb{D}}
\newcommand{\wH}{w_\mathrm{H}}
\newcommand{\Define}[1]{\textbf{#1}}
\newcommand{\supp}{\mathrm{supp}}
\newcommand{\dH}{d_\mathrm{H}}
\newcommand{\dc}{d_C}
\newcommand{\GF}{\mathrm{GF}}
\newcommand{\Topology}{\mathbb{T}}
\newtheorem{proposition}{Proposition}
\newtheorem{lemma}{Lemma}
\newtheorem{theorem}{Theorem}
\newtheorem{corollary}{Corollary}
\newtheorem{problem}{Problem}
\begin{document}
	
	\maketitle

\begin{abstract}
A Boolean network is a mapping $f :\{0,1\}^n \to \{0,1\}^n$, which can be used to model networks of $n$ interacting entities, each having a local Boolean state that evolves over time according to a deterministic function of the current configuration of states. 
In this paper, we are interested in disjunctive networks, where each local function is simply the disjunction of a set of variables. As such, this network is somewhat homogeneous, though the number of variables may vary from entity to entity, thus yielding a generalised cellular automaton. The aim of this paper is to review some of the main results, derive some additional fundamental results, and highlight some open problems on the dynamics of disjunctive networks.
We first review the different defining characteristics of disjunctive networks and several ways of representing them using graphs, Boolean matrices, or binary relations.
We then focus on three dynamical properties of disjunctive networks: their image points, their periodic points, and their fixed points. For each class of points, we review how they can be characterised and study how many they could be. The paper finishes with different avenues for future work on the dynamics of disjunctive networks and how to generalise them.
\end{abstract}

\section{Introduction}
Consider a finite network of $n$ entities $1 \le i \le n$, where each has a Boolean state $x_i \in \{0,1\}$ that evolves over time according to a deterministic rule $f_i(x_1, \dots, x_n) : \{0,1\}^n \to \{0,1\}$. The evolution of the configuration of states $x = (x_1, \dots, x_n)$ is fully characterised by the global update function $f = (f_1, \dots, f_n) : \{0,1\}^n \to \{0,1\}^n$. Such a function is called a Boolean network. Boolean networks are a versatile model and as such have been used to represent different networks, such as gene networks, neural networks, social networks, or network coding (see \cite{GR16} and references therein for the applications of Boolean networks).

Cellular automata form a major model of discrete interactions. The network of entities (sometimes referred to as cells) is homogeneous, in the sense that the local update functions are all similar, and even though the number of cells is usually infinite, the interactions are only local. Cellular automata have attracted a huge amount of interest, both for their theoretical properties and their numerous applications (see \cite{HM17,Ila01}).

Boolean networks and Cellular automata share some common typical characteristics, such as discrete space, discrete time, and finite state values for each entity (though some generalisations do not share all those characteristics). However, they differ in many aspects. Boolean networks are very general, with heterogeneous network topology, different local functions, finite number of entities, and various update schedules (synchronous or asynchronous). Cellular automata are much more restricted, with regular topology (usually a lattice $\Z^d$), the same local update function everywhere, an infinite number of cells, and typically parallel updates.

There are many ways to possibly bridge the gap between these two models, by either relaxing some properties of CA or focusing on specific Boolean networks. In this paper, we are interested in a special class of Boolean networks where the local functions $f_i$ are ``the same'' everywhere, despite different cells interacting with more or less cells. In particular, we focus on disjunctive networks, where the local function at every entity is the disjunction of some variables:
\[
    f_i(x) = \bigvee_{j \in N(i)} x_j.
\]
The set of variables $N(i)$ does depend on the entity and its size may vary.

Since disjunctions are such special Boolean functions, disjunctive networks can be characterised in different ways. We first review these characterisations in Section \ref{section:characterisations}. Also, disjunctive networks can be represented using graphs, Boolean matrices, or binary relations; again, we review these representations in Section \ref{section:representations}.

The dynamical properties of Boolean networks have been thoroughly studied, see \cite{ARS14,ADG04,ARS17,GRR15,Gol85,GT83,PR12} for example. Due to their different representations, disjunctive networks have attracted interest inside and outside of the Boolean network community. For instance, some early work on binary relations and Boolean matrices classifies convergent and idempotent disjunctive networks \cite{Kim72,Kim82,Ros63}. The main interests in the dynamical properties of disjunctive networks include the transient length \cite{GH00}, the characterisation of their cycle structure \cite{GH00, SLV10}, and in particular determining when they present no oscillations \cite{AMPV12,SLV10}. It is worth noting that some results on disjunctive networks are included in works that consider related or more general classes of Boolean networks, e.g. \cite{ADG04a,ARS14,ADG04b,ARS17}. Even though in this paper we focus on the synchronous dynamics (i.e. parallel updates), the reader interested in the asynchronous dynamics of disjunctive networks is directed to \cite{GM12}.

In this paper, we are interested in the following three dynamical features of a disjunctive network. An image point of $f$ is a reachable state; a periodic point is a recurring state ; and a fixed point is a stationary state. We first give characterisations of their sets of image points, periodic points, and fixed points, respectively in Section \ref{subsection:points}. We then consider the number of image, periodic, and fixed points of disjunctive networks. We prove some results on the possible values these numbers can take in Section \ref{subsection:ranks}.

In this paper, we survey some of the main results on disjunctive networks, obtain some new results, and highlight some open problems in that area. The rest of this paper is organised as follows. Section \ref{section:elementary_properties} defines disjunctive networks, gives different characterisations of such networks, and illustrates how they can be represented. In Section \ref{section:dynamic_properties}, we study the dynamical properties of disjunctive networks when updated synchronously, with a focus on image points, periodic points, and fixed points. Finally, some possible avenues for generalisations and future work are given in Section \ref{sec:conclusion}.

\section{Elementary properties} \label{section:elementary_properties}

\subsection{Definition} \label{section:definition}

We denote the Boolean alphabet as $\B = \{0,1\}$, which we endow with the natural order $0 < 1$. For any $x,y \in \B$, their \Define{disjunction} is simply $x \lor y = \max \{ x, y \}$.  The disjunction operation satisfies the following properties (for all $x,y,z \in \B^n)$:
\begin{itemize}
    \item It is associative: $x \lor ( y \lor z ) = (x \lor y) \lor z$.
    
    \item It is commutative: $x \lor y = y \lor x$.
    
    \item It has an identity element, namely $0$: $x \lor 0 = 0 \lor x = x$.
\end{itemize}
In view of these properties, we can generalise disjunction to an arbitrary number of Boolean variables. Let $S$ be a finite set and consider a configuration $x = (x_s : s \in S) \in \B^S$, then 
\[
    \bigvee_{s \in S} x_s = \begin{cases}
        1 & \text{if } \exists t \in S : x_t = 1\\
        0 & \text{otherwise}.
    \end{cases}
\]
In particular, if $S = \emptyset$, then $\bigvee_{s \in S} x_s = 0$; if $S = \{s\}$, then $\bigvee_{s \in S} x_s = x_s$.

The \Define{conjunction} of $x,y \in \B$ is $x \land y = \min \{ x, y \}$. Since conjunction and disjunction are dual, i.e. they are equivalent up to re-ordering $0$ and $1$, all the results we shall state about disjunction can be translated to apply to conjunction as well. In particular, we can define the conjunction of an arbitrary set of variables. For our purposes, it will be useful to group disjunction and conjunction under one common name. As such, we say that an operation is a \Define{junction} if it is either a disjunction or a conjunction.

A point $x \in \B^n$ is called a \Define{configuration}, which we shall denote as $x = (x_1, \dots, x_n)$ where $x_i \in \B$ for all $i \in [n] := \{1, \dots, n\}$. We introduce some further notation for configurations: we write $x = (x_i, x_{-i})$ where $x_{-i} = (x_1, \dots, x_{i-1}, x_{i+1}, \dots, x_n) \in \B^{n-1}$, and we define the unit configuration $e^j$ to satisfy $e^j_j = 1$, $e^j_{-j} = 0_{-j}$. The Hamming distance between $x,y \in \B^n$ is the number of coordinates they disagree: $\dH( x, y )  = |\{ i \in [n] : x_i \ne y_i\}|$; the Hamming weight of $x$ is the number of ones in $x$: $\wH(x) = |\{ i : x_i = 1 \}|$.

A \Define{Boolean network} of dimension $n$ is a mapping $f : \B^n \to \B^n$. We also split $f$ as $f = (f_1, \dots, f_n)$ where $f_i : \B^n \to \B$ is a Boolean function representing the update of the state $x_i$ of the $i$-th entity of the network. A Boolean network $f$ is \Define{disjunctive} if $f_i$ is a disjunction for all $i \in [n]$.

A (directed) graph $D = (V,E)$ is a pair where $V$ is the set of vertices and $E \subseteq V^2$ is the set of arcs of $D$ \cite{BG09a}. In this paper, we only consider finite graphs and we shall usually identify isomorphic graphs. For any vertex $i \in V$, its in-neighbourhood in $D$ is $\inNeighbourhood(i) = \{ j \in V : (j,i) \in E \}$ and its in-degree is the size of its in-neighbourhood; the out-neighbourhood an out-degree are defined similarly. A vertex is a source (sink, respectively) if its in-neighbourhood (out-neighbourhood, respectively) is empty.

The \Define{interaction graph} of $f$, denoted as $\InteractionGraph(f)$, represents the influences of entities on one another. Formally, $\InteractionGraph(f) = (V,E)$, where $V = [n]$ and $(i,j) \in E$ if and only if $f_j$ depends essentially on $x_i$, i.e. there exists $a_{-i}$ such that
\[
    f_j(0,a_{-i}) \ne f_j(1, a_{-i}).
\]
If $D$ is the interaction graph of $f$, we then say that $f$ is a Boolean network on $D$. Clearly, for every graph $D$ there is a unique disjunctive network on $D$.

\subsection{Characterisations} \label{section:characterisations}

We can extend the order $0 < 1$ in $\B$ to configurations $x,y \in \B^n$ componentwise: we write $x \le y$ if and only if $x_i \le y_i$ for all $i \in [n]$. We can also extend the disjunction notation to configurations by applying it componentwise: $x \lor y = z$ with $z_i = x_i \lor y_i$ for all $i \in [n]$. We then have
\begin{equation} \label{equation:xley}
    x \le y \iff x \lor y = y.
\end{equation}

A Boolean function $\phi : \B^n \to \B$ is \Define{monotone} if $x \le y$ implies $\phi(x) \le \phi(y)$. It is easily checked that any junction is monotone. We say a Boolean network $f$ is \Define{monotone} if $x \le y$ implies $f(x) \le f(y)$; clearly $f$ is monotone if and only if $f_i$ is monotone for all $i$.

\begin{lemma} \cite[Theorem 11.1]{Hel11} \label{lemma:monotone}
A Boolean network $f$ is monotone if and only if for all $x,y \in \B^n$, 
\begin{equation} \label{equation:monotone}
    f(x \lor y) \ge f(x) \lor f(y).    
\end{equation}
\end{lemma}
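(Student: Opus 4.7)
The plan is to prove both implications directly using only the componentwise order on $\B^n$, the definition of monotonicity, and the characterisation $x \le y \iff x \lor y = y$ from equation~\eqref{equation:xley}.

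For the forward direction, I would assume $f$ is monotone and fix arbitrary $x,y \in \B^n$. The first step is to observe, componentwise, that $x \le x \lor y$ and $y \le x \lor y$; this follows immediately from $x_i \le \max\{x_i,y_i\}$ and the symmetric inequality for $y_i$. Applying monotonicity of $f$ to both of these yields $f(x) \le f(x \lor y)$ and $f(y) \le f(x \lor y)$, and taking the componentwise supremum of the left-hand sides gives $f(x) \lor f(y) \le f(x \lor y)$, which is \eqref{equation:monotone}.

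For the reverse direction, assume \eqref{equation:monotone} holds and let $x \le y$ in $\B^n$. By \eqref{equation:xley}, $x \lor y = y$, so \eqref{equation:monotone} specialises to $f(y) = f(x \lor y) \ge f(x) \lor f(y) \ge f(x)$, where the last inequality again uses that $f(x) \lor f(y)$ is componentwise above both $f(x)$ and $f(y)$. Hence $f(x) \le f(y)$, proving monotonicity.

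There is no real obstacle here: the lemma is essentially a bookkeeping observation that the join operation on $\B^n$ is the supremum with respect to the componentwise order, so preservation of order is equivalent to sub-preservation of joins. The only care needed is to use \eqref{equation:xley} explicitly in the reverse direction, rather than informally identifying $x \le y$ with $x \lor y = y$.
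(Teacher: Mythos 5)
Your proof is correct and follows essentially the same route as the paper's: the forward direction uses $x,y \le x \lor y$ plus monotonicity, and the reverse direction specialises \eqref{equation:monotone} to $x \le y$ via the identity $x \lor y = y$ from \eqref{equation:xley}, exactly as the paper does with $f(b) = f(a \lor b) \ge f(a)$. Nothing to correct.
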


\begin{proof}
Suppose $f : \B^n \to \B^n$ is monotone, then $f(x \lor y) \ge f(x)$ and $f(x \lor y) \ge f(y)$, hence $f(x \lor y) \ge f(x) \lor f(y)$. Conversely, if $f(x \lor y) \ge f(x)$ for all $x,y$, then for any $a \le b$ we have $f(b) = f(a \lor b) \ge f(a)$, hence $f$ is monotone. 
\end{proof}

Combining \eqref{equation:xley} and \eqref{equation:monotone}, we obtain a characterisation of monotone networks based on an equation.

\begin{corollary} \cite[Example 11.5]{Hel11}
A Boolean network $f$ is monotone if and only if for all $x,y \in \B^n$,
\[
    f(x \lor y) = f(x) \lor f(x \lor y).
\]
\end{corollary}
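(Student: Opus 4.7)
The plan is to view the claimed identity as nothing more than a disjunctive rewriting of the inequality $f(x) \le f(x \lor y)$, using the equivalence \eqref{equation:xley}, i.e. $a \le b \iff a \lor b = b$. Both directions then reduce to one-line arguments, with the forward direction appealing directly to monotonicity and the backward direction reusing \eqref{equation:xley} in a symmetric way.

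For the forward implication, I would assume $f$ is monotone and observe that $x \le x \lor y$ holds trivially (e.g.\ from \eqref{equation:xley}, since $x \lor (x \lor y) = x \lor y$). Monotonicity then yields $f(x) \le f(x \lor y)$, and applying \eqref{equation:xley} to $f(x)$ and $f(x \lor y)$ rewrites this inequality as the desired equation
\[
    f(x \lor y) = f(x) \lor f(x \lor y).
\]

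For the converse, I would assume the identity and first restate it, via \eqref{equation:xley}, as $f(x) \le f(x \lor y)$ for all $x, y \in \B^n$. Given an arbitrary pair $a \le b$, I set $x := a$ and $y := b$; by \eqref{equation:xley} again, $a \lor b = b$, so the hypothesis specialises to $f(a) \le f(b)$. Since $a \le b$ was arbitrary, $f$ is monotone.

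There is no real obstacle here — the only thing to be mindful of is to invoke \eqref{equation:xley} in both directions (once to repackage $f(x) \le f(x \lor y)$ as the equation, and once to realise that $a \le b$ already gives $a \lor b = b$ so that the hypothesis can be applied). Lemma~\ref{lemma:monotone} is not strictly needed, since the argument is entirely self-contained from \eqref{equation:xley}, though the backward direction could equivalently be phrased as a consequence of Lemma~\ref{lemma:monotone} by noting that $f(x) \le f(x \lor y)$ and (by symmetry in $x, y$) $f(y) \le f(x \lor y)$ together yield $f(x \lor y) \ge f(x) \lor f(y)$.
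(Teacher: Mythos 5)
Your proof is correct and follows essentially the same route as the paper, which obtains the corollary by combining \eqref{equation:xley} with the inequality characterisation of Lemma~\ref{lemma:monotone}: in both cases the identity is just the statement $f(x) \le f(x \lor y)$ for all $x,y$, repackaged via \eqref{equation:xley}, and the converse specialises to $a \le b$ with $a \lor b = b$ exactly as in the paper's argument.
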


The first, and arguably canonical, characterisation of disjunctive networks is that they are the endomorphisms of the disjunction on $\B^n$. As such, disjunctive networks are those that reach equality in \eqref{equation:monotone} and that fix the all-zero configuration. The latter property is a technical detail, which comes the fact that the constant Boolean function $\phi(x) = 1$ is not a disjunction and yet also satisfies $\phi(x \lor y) = \phi(x) \lor \phi(y)$.

\begin{theorem} \label{theorem:characterisation1}
A Boolean network $f$ is disjunctive if and only if $f(0, \dots, 0) = (0, \dots, 0)$ and for all $x,y \in \B^n$,
\[
    f(x \lor y) = f(x) \lor f(y).
\]
\end{theorem}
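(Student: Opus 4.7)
The plan is to prove both directions directly from the definition, with the reverse direction being the substantive one.

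For the forward direction, I would assume $f$ is disjunctive, so $f_i(x) = \bigvee_{j \in N(i)} x_j$ for some $N(i) \subseteq [n]$. Evaluating at $x = (0,\dots,0)$, each component is a disjunction of zeros (or an empty disjunction), so $f(0,\dots,0) = (0,\dots,0)$. For the identity $f(x \lor y) = f(x) \lor f(y)$, I would argue componentwise: since $\lor$ on $\B$ is associative, commutative, and idempotent, one can reorganise
\[
    f_i(x \lor y) = \bigvee_{j \in N(i)} (x_j \lor y_j) = \Bigl(\bigvee_{j \in N(i)} x_j\Bigr) \lor \Bigl(\bigvee_{j \in N(i)} y_j\Bigr) = f_i(x) \lor f_i(y).
\]
This is routine.

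For the reverse direction, the key observation I would exploit is that every configuration decomposes as $x = \bigvee_{j : x_j = 1} e^j$, where $e^j$ is the unit configuration already introduced in the paper. Applying the hypothesised additivity of $f$ inductively to this decomposition gives $f(x) = \bigvee_{j : x_j = 1} f(e^j)$, where the edge case of the empty disjunction (when $x = 0$) is handled precisely by the assumption $f(0,\dots,0) = (0,\dots,0)$. Projecting onto the $i$-th coordinate, $f_i(x) = \bigvee_{j : x_j = 1} f_i(e^j)$, which simplifies to $f_i(x) = \bigvee_{j \in N(i)} x_j$ upon setting $N(i) := \{ j \in [n] : f_i(e^j) = 1 \}$. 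This exhibits $f_i$ as a disjunction, concluding the proof.

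The only delicate point, and the reason the hypothesis $f(0) = 0$ appears explicitly in the statement, is the base case $x = 0$ of the decomposition argument: without it, the constant Boolean function $\phi \equiv 1$ would also satisfy $\phi(x \lor y) = \phi(x) \lor \phi(y)$ yet is not a disjunction, as the paper notes just before the theorem. Everything else is a straightforward unpacking of the fact that a map preserving $\lor$ on $\B^n$ is determined by its values on the unit configurations $e^1,\dots,e^n$.
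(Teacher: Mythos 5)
Your proof is correct, but it takes a genuinely different route from the paper's. The paper argues the reverse implication by contradiction: it first invokes Lemma \ref{lemma:monotone} to deduce that $f$ is monotone, then observes that if some $f_i$ is not a disjunction there must be an essential variable $j$ of $f_i$ with $f_i(e^j) = 0$, and finally contradicts the additivity hypothesis by evaluating it on the pair $(0,a_{-j})$ and $e^j$, where $a_{-j}$ witnesses the essential dependence. Your argument is instead direct and constructive: you decompose $x = \bigvee_{j \in \supp(x)} e^j$, propagate additivity (with $f(0,\dots,0)=(0,\dots,0)$ covering the empty join) to get $f_i(x) = \bigvee_{j : x_j = 1} f_i(e^j)$, and read off the disjunction $f_i(x) = \bigvee_{j \in N(i)} x_j$ with $N(i) = \{ j : f_i(e^j) = 1 \}$, the empty $N(i)$ correctly giving the empty disjunction. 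Your route is more elementary, since it needs neither Lemma \ref{lemma:monotone} nor the notion of essential variable, and it exhibits the defining set $N(i)$ explicitly; the paper's route is shorter on the page because it reuses the monotonicity lemma already established, and it fits the surrounding narrative tying disjunctive networks to monotone ones. Both handle the role of the hypothesis $f(0,\dots,0)=(0,\dots,0)$ correctly, you via the empty-support base case, the paper via excluding the constant-one local function.
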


\begin{proof}
The forward implication is straightforward; we focus on the reverse implication. Suppose $f$ fixes $(0,\dots,0)$ and $f(x \lor y) = f(x) \lor f(y)$ for all $x,y \in \B^n$. First, by Lemma \ref{lemma:monotone}, $f$ is monotone. If $f$ is not disjunctive, then $f_i$ is not a disjunction for some $i$, and hence by monotonicity there exists $j \in \inNeighbourhood(i)$ such that $f_i(e^j) = 0$. There exists $a_{-j}$ such that $f_i(0,a_{-j}) = 0$ and $f_i( 1, a_{-j} ) = 1$. We obtain
\begin{align*}
    f_i( (0, a_{-j}) \lor e^j ) = f_i(1, a_{-j}) &= 1,\\
    f_i( 0, a_{-j} ) \lor f_i(e^j) = 0 \lor 0 &= 0,
\end{align*}
which is the desired contradiction.
\end{proof}

The second characterisation is that disjunctive networks are precisely the submodular monotone networks. A Boolean network is \Define{submodular} if for all $x,y \in \B^n$,
\[
    f(x \lor y) \lor f(x \land y) \le f(x) \lor f(y).
\]
Submodular Boolean functions form an important class of Horn functions; the interested reader is directed to \cite[Section 6.9.1]{Bor11} and references therein. Theorem \ref{theorem:characterisation1} then immediately yields a second characterisation of disjunctive networks, implicit from Theorems 11.1 and 11.4 in \cite{Hel11}.

\begin{theorem}  \label{theorem:characterisation2}
A Boolean network is disjunctive if and only if it is monotone and submodular and it fixes the all-zero configuration.
\end{theorem}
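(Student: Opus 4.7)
The plan is to derive Theorem~\ref{theorem:characterisation2} directly from the functional equation characterisation given in Theorem~\ref{theorem:characterisation1}. In each direction the work reduces to sandwiching $f(x \lor y)$ between $f(x) \lor f(y)$ and itself using the monotonicity bound from Lemma~\ref{lemma:monotone} on one side and the submodularity inequality on the other. Since Theorem~\ref{theorem:characterisation1} is already at our disposal, there is no need to revisit the unit-configuration argument used in its proof.

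For the forward implication, I would assume $f$ disjunctive. The fact that $f$ fixes $(0,\dots,0)$ is immediate from the definition, and monotonicity of $f$ follows because every disjunction is monotone. For submodularity, invoke Theorem~\ref{theorem:characterisation1} to rewrite $f(x \lor y) = f(x) \lor f(y)$, whence
\[
    f(x \lor y) \lor f(x \land y) = f(x) \lor f(y) \lor f(x \land y) = f(x) \lor f(y),
\]
where the last equality uses $x \land y \le x$ together with monotonicity to absorb $f(x \land y) \le f(x)$.

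For the reverse implication, assume $f$ is monotone, submodular, and fixes the all-zero configuration. By Lemma~\ref{lemma:monotone}, monotonicity yields $f(x \lor y) \ge f(x) \lor f(y)$. In the other direction, submodularity gives
\[
    f(x \lor y) \le f(x \lor y) \lor f(x \land y) \le f(x) \lor f(y).
\]
Combining these two inequalities produces $f(x \lor y) = f(x) \lor f(y)$, and together with $f(0,\dots,0) = (0,\dots,0)$ this lets us conclude via Theorem~\ref{theorem:characterisation1} that $f$ is disjunctive.

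There is no real obstacle: the argument is a short assembly of Lemma~\ref{lemma:monotone}, Theorem~\ref{theorem:characterisation1}, and the definition of submodularity. The only point that requires a moment of care is noticing that the term $f(x \land y)$ appearing in the submodularity inequality can simply be discarded on the left by bounding $f(x \lor y) \le f(x \lor y) \lor f(x \land y)$, rather than trying to handle it directly.
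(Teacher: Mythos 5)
Your proof is correct and follows the same route the paper intends: the paper states that Theorem~\ref{theorem:characterisation2} follows immediately from Theorem~\ref{theorem:characterisation1}, and your argument simply fills in the (omitted) details, namely that submodularity supplies the inequality $f(x \lor y) \le f(x) \lor f(y)$ while Lemma~\ref{lemma:monotone} supplies the reverse inequality, so the functional equation holds and Theorem~\ref{theorem:characterisation1} applies. The forward direction, absorbing $f(x \land y)$ via monotonicity, is likewise exactly what the paper leaves implicit.
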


The third characterisation of disjunctive networks is that of maximal locally idempotent networks. A Boolean function $\phi : \B^n \to \B$ is \Define{idempotent} if $\phi(0,\dots, 0) = 0$ and $\phi(1, \dots, 1) = 1$ \cite{PR10a}. We then say a Boolean network $f$ is \Define{locally idempotent} if $f_i$ is idempotent for all $i$; equivalently $f$ is locally idempotent if it fixes the all-zero and the all-one configurations. Since an idempotent Boolean function is not constant, the interaction graph of a locally idempotent network has no sources. Conversely, any monotone network on a graph with no sources is locally idempotent. For two Boolean networks $f,g : \B^n \to \B^n$, we naturally write $f \le g$ if $f(x) \le g(x)$ for all $x \in \B^n$.

\begin{theorem} \label{theorem:maximal_locally_idempotent}
Let $f$ be a locally idempotent network with interaction graph $D$ and let $f^\land$ and $f^\lor$ be the conjunctive and disjunctive networks on $D$, respectively. Then 
\[
    f^\land \le f \le f^\lor.
\]
\end{theorem}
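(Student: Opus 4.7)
I would argue componentwise: for each coordinate $j \in [n]$ and each $x \in \B^n$, it suffices to show $f^\land_j(x) \le f_j(x) \le f^\lor_j(x)$. Writing $N = \inNeighbourhood(j)$ for the in-neighbourhood of $j$ in $D$, the definitions give
\[
    f^\lor_j(x) = \bigvee_{i \in N} x_i, \qquad f^\land_j(x) = \bigwedge_{i \in N} x_i.
\]

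The core observation is that $f_j$ is determined by the coordinates indexed by $N$. By definition of the interaction graph, for every $k \notin N$ and every $a_{-k}$ we have $f_j(0, a_{-k}) = f_j(1, a_{-k})$; a straightforward induction on the number of positions outside $N$ where $x$ and $y$ differ then yields $f_j(x) = f_j(y)$ whenever $x_i = y_i$ for all $i \in N$. In other words, the entries of an input outside $N$ may be set to any convenient value without altering $f_j$.

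For the upper bound, I would assume $f^\lor_j(x) = 0$ (otherwise the inequality is trivial); this forces $x_i = 0$ for all $i \in N$. Applying the observation by replacing each $x_k$ with $0$ for $k \notin N$ gives $f_j(x) = f_j(0, \dots, 0)$, which is $0$ by local idempotency of $f$. The lower bound is symmetric: if $f^\land_j(x) = 1$, then $x_i = 1$ for all $i \in N$, and the observation together with $f_j(1, \dots, 1) = 1$ gives $f_j(x) = 1$.

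I do not expect any genuine obstacle: the only subtle point is the induction underlying the core observation, which simply leverages the precise definition of essential dependence. Notably, monotonicity of $f$ is not required and is not used; only the idempotency of each $f_j$ at the two constant configurations is needed.
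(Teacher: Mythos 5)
Your proof is correct and takes essentially the same route as the paper: the paper argues coordinatewise that $f_i(x)=0$ forces $x_{\inNeighbourhood(i)}\ne(1,\dots,1)$ and $f_i(x)=1$ forces $x_{\inNeighbourhood(i)}\ne(0,\dots,0)$, which is just the contrapositive of your two cases, with the fact that $f_i$ depends only on its in-neighbours (your ``core observation'') left implicit rather than proved by induction. Your remark that monotonicity is not needed is consistent with the paper, whose statement and proof use only local idempotency.
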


\begin{proof}
For all $x \in \B^n$ and all $i \in [n]$,
\begin{align*}
    f_i(x) &= 0 \implies x_{\inNeighbourhood(i)} \ne (1, \dots, 1) \implies f^\land_i(x) = 0,\\
    f_i(x) &= 1 \implies x_{\inNeighbourhood(i)} \ne (0, \dots, 0) \implies f^\lor_i(x) = 1.
\end{align*}
This yields $f^\land_i(x) \le f_i(x) \le f_i^\lor(x)$.
\end{proof}

The fourth characterisation of disjunctive networks is to be ``closest'' to constant networks--technically this does not fully characterise disjunctive networks, as any network where the local function is a junction satisfies this property. The \Define{distance} between two Boolean networks $f,g : \B^n \to \B^n$ is
\[
    d(f, g) := \sum_{x \in \B^n} \dH(f(x), g(x)).
\]
A Boolean network $g$ is \Define{constant} if there exists $c \in \B^n$ for which $g(x) = c$ for all $x \in \B^n$; we denote the set of constant networks as $C$. The \Define{distance to constant networks} of $f$ is then defined as
\begin{align*}
    \dc( f ) &:= \min\{ d(f, g) :  g \in C \}\\
    &= \min_{c \in \B^n} \sum_{x \in \B^n} \dH(f(x), c).
\end{align*}
Then $\dc(f)$ ranges from $0$ (whenever $f$ is constant) to $n 2^{n - 1}$ (whenever $f$ is a permutation, amongst others).

\begin{theorem} \label{theorem:minimise_distance_to_constant}
Let $f$ be a Boolean network with interaction graph $D$ and let $f^\land$ and $f^\lor$ be the conjunctive and disjunctive networks on $D$, respectively. Then 
\[
    \dc(f) \ge \dc(f^\lor) = \dc(f^\land).
\]
\end{theorem}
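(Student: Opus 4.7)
The plan is to exploit the fact that $\dc$ decomposes coordinatewise: since each coordinate $c_i$ of the candidate constant can be optimised independently in the minimum defining $\dc(f)$, one has
\[
\dc(f) = \sum_{i=1}^n \min\bigl( w_i,\ 2^n - w_i \bigr),
\]
where $w_i := |\{ x \in \B^n : f_i(x) = 1\}|$. It therefore suffices to compare the $i$-th summands for $f$, $f^\lor$ and $f^\land$ one coordinate at a time.

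The second step is to evaluate these quantities for $f^\lor$ and $f^\land$ explicitly. Let $d_i$ denote the in-degree of $i$ in $D$. Since $f^\lor_i(x) = 0$ if and only if $x_j = 0$ for every $j \in \inNeighbourhood(i)$, one gets $|\{x : f^\lor_i(x) = 0\}| = 2^{n - d_i}$ when $d_i \ge 1$, so $\min(w_i^\lor, 2^n - w_i^\lor) = 2^{n - d_i}$; a dual count gives the same value for $f^\land$. When $d_i = 0$, both minima vanish. Summing over $i$ yields the equality $\dc(f^\lor) = \dc(f^\land)$.

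For the inequality $\dc(f) \ge \dc(f^\lor)$, the key observation is that $f_i$ depends essentially only on the $d_i$ variables indexed by $\inNeighbourhood(i)$, so $f_i$ factors through the projection $\B^n \to \B^{d_i}$ and $w_i$ is a multiple of $2^{n-d_i}$. When $d_i \ge 1$, $f_i$ is nonconstant (it depends essentially on at least one variable), so both $w_i$ and $2^n - w_i$ are strictly positive multiples of $2^{n - d_i}$, whence $\min(w_i, 2^n - w_i) \ge 2^{n-d_i} = \min(w_i^\lor, 2^n - w_i^\lor)$. The case $d_i = 0$ is trivial since both sides vanish.

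The main subtlety, rather than a real obstacle, is the handling of source vertices ($d_i = 0$), where $f^\lor_i \equiv 0$ and $f^\land_i \equiv 1$ are the empty junctions; one must check that these degenerate coordinates behave consistently in all three networks $f$, $f^\lor$ and $f^\land$, which they do because $f_i$ is itself constant at such vertices by definition of the interaction graph, so the $i$-th term is $0$ on both sides.
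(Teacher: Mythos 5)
Your proof is correct and follows essentially the same route as the paper: decompose $\dc$ coordinatewise, optimise the constant $c_i$ per coordinate, discard source vertices, and bound each non-source term below by $2^{n-d_i}$, with equality for the junction networks. You additionally spell out the justification (that $|f_i^{-1}(0)|$ and $|f_i^{-1}(1)|$ are positive multiples of $2^{n-d_i}$) that the paper leaves implicit in its final inequality.
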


\begin{proof}
For any $a,b \in \B$, we denote $a \oplus b = a + b \mod 2$, i.e. $a \oplus b = 1$ if and only if $a = \neg b$.  We have
\begin{align*}
    d_C(f) &= \min_{c \in \B^n} \sum_{x \in \B^n} \sum_{i \in [n]} f_i(x) \oplus c_i\\
    &= \min_{c \in \B^n} \sum_{i \in [n]} |f_i^{-1}(\neg c_i)|.
\end{align*}
For any $i$, let $d_i$ denote the in-degree of $i$ in $D$; let $T$ denote the set of non-sources of $D$. We have
\begin{align*}
    \dc(f) &= \min_{c \in \B^n} \sum_{i \in [n]} | f_i^{-1}( \neg c_i) |\\
    &=  \sum_{i \in [n]} \min\{ |f_i^{-1}(0)|, |f_i^{-1}(1)| \}\\
    &= \sum_{i \in T} \min\{ |f_i^{-1}(0)|, |f_i^{-1}(1)| \}\\
    &\ge \sum_{i \in T} 2^{n- d_i}.
\end{align*}
It is clear that the last inequality is reached for the disjunctive (or conjunctive) network on $D$.
\end{proof}

\subsection{Representations} \label{section:representations}

Let $f$ be the disjunctive network on $D = (V = [n], E)$. We give below four representations of $f$. 
\begin{description}
\item[Boolean linear mapping] A graph can be represented by its adjacency matrix $A_D$ where $a_{i,j} = 1$ if and only if $(i,j) \in E$. The product of two Boolean matrices is $AB = C$ where 
\[
    c_{ij} = \bigvee_{k=1}^n a_{ik} \land b_{kj}.
\]
Boolean matrix theory has been widely studied from an algebraic and combinatorial point of view, and has found applications in different areas of computer science; the interested reader is directed to the authoritative book \cite{Kim82}.  A \Define{Boolean linear mapping} is any $g : \B^n \to \B^n$ of the form $g(x) = x A$ for some Boolean matrix $A$. In our case, the disjunctive network on $D$ satisfies $f(x) = x A_D$. Since any Boolean matrix is the adjacency matrix of some graph, disjunctive networks are exactly the Boolean linear mappings. We remark that Boolean linear mappings are different from their finite field counterparts, which are usually simpler to analyse.

\item[Out-neighbourhood function] Identifying $x \in \B^n$ with its support  $X = \supp(x) = \{ i \in [n] : x_i = 1 \}$, we can identify $f$ with the mapping on the power set of $[n]$ defined by
\[
    f(X) = \outNeighbourhood(X).
\]

\item[Binary relation] A \Define{binary relation} $R$ on $[n]$ is a subset of $[n] \times [n]$. Binary relations are in one-to-one correspondence with Boolean matrices; as such, the semigroup of binary relations has been widely studied \cite{How95}. Clearly graphs are also in one-to-one correspondence with binary relations (the edge set $E$ of $D$ is a binary relation). We can then represent $f$ as $f(X) = \{ y \in [n] : \exists s \in X, (s,y) \in E \}$.

\item[Token sliding] We can rewrite the representation above as
\[
    f_i(X) = \bigcup_{j \in \inNeighbourhood (i)} X_j.
\]
This can be interpreted as follows. Suppose there are $n$ tokens $T = \{t_1, \dots, t_n\}$ on the $n$ vertices of $D$. Let $X_i \subseteq T$ denote the collection of tokens that $i$ holds at a given time. At each time step every vertex $i$ broadcasts all its tokens to all possible destinations. Then vertex $j$ obtains all tokens from its in-neighbours, and hence $X_j$ becomes $\bigcup_{j \in \inNeighbourhood (i)} X_j$.

\end{description}

\section{Dynamical properties} \label{section:dynamic_properties}

Let $f$ be a Boolean network. We consider three types of points for $f$:
\begin{itemize}
    \item 
    An \Define{image point} of $f$ is $x \in \B^n$ such that $f(y) = x$ for some $y \in \B^n$. In the transformation semigroup literature, the number of image points is usually called ``rank''. However, different ranks for the matrix $A_D$ have been proposed \cite{Kim82}. Therefore, in order to emphasize that we are counting the number of image points, we shall use refer to the number of image points of $f$ as its \Define{image rank}.
    
    \item
    A \Define{periodic point} of $f$ is $x \in \B^n$ such that $f^k(x) = x$ for some $k \ge 1$. The number of periodic points of $f$ is called the \Define{periodic rank} of $f$. Clearly, the periodic rank of $f$ is equal to the image rank of $f^{2^n}$, and is less than or equal to the image rank of $f^p$ for any $p \ge 1$.
    
    \item
    A \Define{fixed point} of $f$ is $x \in \B^n$ such that $f(x) = x$. The number of fixed points of $f$ is called the \Define{fixed rank} of $f$. Unlike the image and periodic ranks, the fixed rank of a Boolean network can be equal to zero.
\end{itemize}

\subsection{Image, periodic and fixed points} \label{subsection:points}

In this subsection, we review some of the key results describing the sets of image points, periodic points, and fixed points of disjunctive networks. It will be convenient to identify a Boolean configuration $x \in \B^n$ with its support $X = \{ i \in [n] : x_i = 1 \} \subseteq [n]$ and to view the disjunctive network on $D$ as $f : X \mapsto \outNeighbourhood(X)$.

\subsubsection{Image points} \label{section:image_points}

Given a disjunctive network $f$ and a subset $X$, it is easy to verify whether $X$ is an image point of $f$. By definition, $X$ belongs to the image of $f$ if it is the out-neighbourhood of some set of vertices: $X = \outNeighbourhood(Y)$ for some $Y \subseteq [n]$. The key property is that there is a unique maximal preimage $Y^*$ of $X$, so we only need to compute $f(Y^*)$ and check whether it matches with $X$.

\begin{proposition}
For any $X \subseteq [n]$, let 
\[
    Y^* := [n] \setminus \left( \inNeighbourhood( [n] \setminus  X  ) \right) .
\]
Then $X$ is an image point of $f$ if and only if $X = \outNeighbourhood(Y^*)$. If that is the case, then 
\[
    Y^* = \bigcup_{Y \in f^{-1}(X)} Y.
\]
\end{proposition}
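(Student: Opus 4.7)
The plan is to characterize $Y^*$ as the unique largest set whose image under $f$ is contained in $X$, and then use the monotonicity of disjunctive networks (established earlier in Theorem \ref{theorem:characterisation1} and Lemma \ref{lemma:monotone}) to show that if $X$ is in the image at all, then $Y^*$ must witness it.

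First I would unpack the definition of $Y^*$ set-theoretically. For any $Y \subseteq [n]$, the condition $f(Y) \subseteq X$ is equivalent to $f(Y) \cap ([n] \setminus X) = \emptyset$. Since $i \in f(Y) = \outNeighbourhood(Y)$ means $\inNeighbourhood(i) \cap Y \ne \emptyset$, the condition $f(Y) \cap ([n]\setminus X) = \emptyset$ is in turn equivalent to requiring that $Y \cap \inNeighbourhood(i) = \emptyset$ for every $i \in [n]\setminus X$, i.e. $Y \cap \inNeighbourhood([n] \setminus X) = \emptyset$, i.e. $Y \subseteq Y^*$. Thus $Y^*$ is precisely the largest subset of $[n]$ with $f(Y^*) \subseteq X$, and moreover $f(Y) \subseteq X \iff Y \subseteq Y^*$.

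Next I would prove the main equivalence. The reverse implication is immediate: if $X = \outNeighbourhood(Y^*)$, then $X$ is by definition an image point. For the forward implication, suppose $X = f(Z)$ for some $Z \subseteq [n]$. Then $f(Z) \subseteq X$, so by the characterization above, $Z \subseteq Y^*$. Since $f$ is monotone as a map on subsets (equivalently, it respects $\le$ on $\B^n$), we get $X = f(Z) \subseteq f(Y^*)$. Combined with $f(Y^*) \subseteq X$ from the defining property of $Y^*$, this yields $X = f(Y^*) = \outNeighbourhood(Y^*)$.

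Finally, for the union formula, assume $X$ is an image point, so $Y^* \in f^{-1}(X)$ by what we have just shown; this gives the inclusion $Y^* \subseteq \bigcup_{Y \in f^{-1}(X)} Y$. For the reverse inclusion, any $Y \in f^{-1}(X)$ satisfies $f(Y) = X \subseteq X$, so by the first paragraph $Y \subseteq Y^*$, whence the union is also contained in $Y^*$. I do not anticipate a genuine obstacle here; the only step that requires care is the correct translation between ``$\inNeighbourhood$'' on a set and ``$\outNeighbourhood$'' under $f$, to make sure the complement and the preimage of the complement line up as claimed.
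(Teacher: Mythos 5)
Your proof is correct and follows essentially the same route as the paper: show that any preimage of $X$ is contained in $Y^*$, use monotonicity to get $X \subseteq f(Y^*)$, and use the definition of $Y^*$ to get $f(Y^*) \subseteq X$. Your packaging of the key step as the equivalence $f(Y) \subseteq X \iff Y \subseteq Y^*$, and your explicit note that $Y^* \in f^{-1}(X)$ gives one inclusion of the union formula, are just slightly more detailed renderings of the paper's argument.
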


\begin{proof}
Suppose $X = f(Y)$ for some $Y$. Firstly, $Y \subseteq Y^*$, for if $j \in Y \cap \inNeighbourhood(i)$ for some $i \notin X$, then $i \in \outNeighbourhood(Y)$. Therefore, $X \subseteq f(Y^*)$. Conversely, $\outNeighbourhood(Y^*) \subseteq X$. Indeed, for any $j \notin X$, $\inNeighbourhood(j) \cap Y^* = \emptyset$ and hence $j \notin \outNeighbourhood(Y^*)$. Combining, we obtain $X = f(Y^*)$.

The second statement follows from the fact that $f(Y) = X$ implies $Y \subseteq Y^*$.
\end{proof}

Determining the image rank of a function of the form $f(x) = xM$, where $M$ is over a finite field $\GF(q)$, is simple: it is given by $q^{\mathrm{rank}(M)}$, where the rank can be computed in polynomial time. On the other hand, it is not even clear whether computing the image rank of a disjunctive network can be done in polynomial time.

\begin{problem}
What is the complexity of the following problem: given a graph $D$, what is the image rank of the disjunctive network on $D$?
\end{problem}

\subsubsection{Periodic points} \label{section:periodic_points}

The set of periodic points of the disjunctive network $f$ on $D$ have been studied in \cite{GH00, SLV10, AMPV12}. In order to keep the notation simple and to give the intuition behind the main results, we focus on the case where $D$ is \Define{strong} (a.k.a. strongly connected), i.e. there is a path from $i$ to $j$ for any two vertices $i$ and $j$. For results about general graphs, see \cite{SLV10}.  

The \Define{loop number} $l(D)$ of $D$ is the greatest common denominator of all the cycle lengths in $D$. If $l(D) = 1$ and $D$ is strong, we say that $D$ is \Define{primitive} (graphs with loop number one are sometimes called aperiodic). It is well known that if $l(D) = 2$, then the graph is bipartite: we can partition its vertex set in two parts such that all the arcs go between the parts. This is generalised as follows.

\begin{lemma} \cite[Theorem 17.8.1]{BG09a}
If a strong digraph $D = (V,E)$ has loop number $l(D) = l \ge 2$, then $V$ can be partitioned into $V_0, \dots, V_{l-1}$ such that $\outNeighbourhood(V_i) = V_{i+1}$ (indices computed modulo $l$).
\end{lemma}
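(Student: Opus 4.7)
The plan is to define the classes of the partition via a distance-mod-$l$ function from a fixed basepoint, and then use strong connectivity to check that the out-neighbourhood relation behaves correctly. Fix any $v_0 \in V$, and for every vertex $u \in V$ define its class $c(u) \in \{0, \dots, l-1\}$ to be the length modulo $l$ of some directed path from $v_0$ to $u$ (such a path exists because $D$ is strong). Then set $V_i = \{ u \in V : c(u) = i \}$. Once $c$ is shown to be well-defined, these sets obviously form a partition of $V$.

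The main step, and the main obstacle, is the well-definedness of $c$: I must show that any two directed walks from $v_0$ to $u$ have the same length modulo $l$. The standard trick is to close things up. Let $P_1, P_2$ be two $v_0$-to-$u$ walks of lengths $a_1, a_2$, and let $Q$ be any $u$-to-$v_0$ walk of length $b$ (it exists since $D$ is strong). Then $P_1 Q$ and $P_2 Q$ are closed walks of lengths $a_1 + b$ and $a_2 + b$. The key combinatorial lemma I would invoke is that every closed directed walk decomposes (by iteratively extracting a simple cycle whenever a vertex repeats) into a disjoint union of simple directed cycles, so its length is a sum of cycle lengths. Since $l = l(D)$ divides every cycle length, it divides the length of every closed walk. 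Hence $l \mid (a_1 + b)$ and $l \mid (a_2 + b)$, giving $a_1 \equiv a_2 \pmod l$ as required.

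With well-definedness established, verifying the partition property $\outNeighbourhood(V_i) = V_{i+1}$ (indices mod $l$) is quick. For the inclusion $\outNeighbourhood(V_i) \subseteq V_{i+1}$: if $u \in V_i$ and $(u,w) \in E$, then concatenating a $v_0$-to-$u$ path of length $\equiv i \pmod l$ with the arc $(u,w)$ gives a $v_0$-to-$w$ walk of length $\equiv i+1 \pmod l$, so $c(w) = i+1 \pmod l$ and $w \in V_{i+1}$. For the converse inclusion $V_{i+1} \subseteq \outNeighbourhood(V_i)$: take $w \in V_{i+1}$; strong connectivity forces $w$ to have at least one in-neighbour $u$ (otherwise no vertex could reach $w$ along a path of positive length, contradicting strong connectivity together with $l \ge 2$), and the forward inclusion applied to $u$ gives $c(u)+1 \equiv c(w) = i+1 \pmod l$, whence $u \in V_i$.

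Finally, I would briefly note that none of the $V_i$ is empty: if some $V_i$ were empty then, by the relation $\outNeighbourhood(V_j) = V_{j+1}$ iterated forwards and backwards, all the $V_j$ would be empty, forcing $V = \emptyset$. So the $V_0, \dots, V_{l-1}$ genuinely form an $l$-part partition. The only delicate point in the whole argument is the closed-walk-to-cycles decomposition underlying well-definedness; everything else is a straightforward chase through the definitions.
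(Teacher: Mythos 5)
Your proof is correct and complete: defining the classes by path length from a basepoint modulo $l$, and securing well-definedness by decomposing closed walks into directed cycles (each of length divisible by $l = l(D)$, the gcd of cycle lengths), is the standard argument for this fact, and your verification of $\outNeighbourhood(V_i) = V_{i+1}$ and of the non-emptiness of the parts is sound. Note that the paper itself gives no proof of this lemma --- it is simply quoted from \cite[Theorem 17.8.1]{BG09a} --- so there is no in-paper argument to compare against; your write-up supplies the classical textbook proof of the cited result.
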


Say a subset of vertices $X$ is \Define{$D$-partite} if 
\[
    X = V_{i_1} \cup \dots \cup V_{i_a}
\]
for some $i_1, \dots, i_a \in [l]$. Clearly, $X$ is a periodic point, since $f^l(X) = X$. The key result is that a subset is a periodic point if and only if it is $D$-partite. Indeed, let $Y$ be non-$D$-partite and let $s \in Y \cap V_i$ and $t \in V_i \setminus Y$. There is a path from $s$ to $t$; that path must have length equal to $k l$ for some $k$, hence $t \in f^{kl}(Y)$. Generalising this idea, we obtain that $f^q(Y)$ will eventually be $D$-partite for $q$ large enough.

\begin{theorem}\cite{GH00}
Let $D$ be a strong graph, then a subset $X$ of vertices is a periodic point of the disjunctive network $f$ on $D$ if and only if $X$ is $D$-partite.
\end{theorem}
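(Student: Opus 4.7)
The plan is to prove the two implications separately, with the harder direction relying on a structural lemma describing $f^q(Y)$ for large $q$. For the easy direction, suppose $X = V_{i_1} \cup \cdots \cup V_{i_a}$ is $D$-partite. Applying $f$ once and using the partition lemma, which gives $\outNeighbourhood(V_i) = V_{i+1}$ with indices mod $l$, one obtains $f(X) = V_{i_1+1} \cup \cdots \cup V_{i_a+1}$. Iterating $l$ times returns $X$, so $X$ is a periodic point.

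For the converse, I would prove the following key lemma. For any $Y \subseteq V$, let $I(Y) = \{ i \in [l] : Y \cap V_i \ne \emptyset \}$. Then for all sufficiently large $q$,
\[
    f^q(Y) = \bigcup_{i \in I(Y)} V_{(i+q) \bmod l}.
\]
The inclusion $\subseteq$ is immediate from $\outNeighbourhood(V_i) = V_{i+1}$ by induction on $q$. The reverse inclusion is the substantive content: given $i \in I(Y)$, a vertex $s \in Y \cap V_i$, and a target $t \in V_{(i+q) \bmod l}$, one must exhibit a walk from $s$ to $t$ of length exactly $q$. Strong connectivity of $D$ supplies \emph{some} walk from $s$ to $t$, and by the partition property its length must be of the form $r + a_0 l$ where $r = q \bmod l$. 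Writing $q = ml + r$, it then suffices to prepend to this walk a closed walk at $s$ of length $(m - a_0)l$. The existence of closed walks at $s$ of all sufficiently large multiple-of-$l$ lengths is the heart of the matter: the set of lengths of closed walks through $s$ is a sub-semigroup of $\N$ (closed under concatenation) whose gcd equals $l(D) = l$ by strong connectivity and the definition of loop number, so by the Sylvester--Frobenius theorem it contains every multiple of $l$ beyond some threshold. Taking the maximum of these thresholds over the finitely many choices of $i \in I(Y)$, $s \in Y \cap V_i$, and $t \in V$ gives a uniform $q_0$ beyond which the lemma holds.

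With the key lemma in hand, the backward direction is immediate. If $X$ is a periodic point with $f^m(X) = X$ for some $m \ge 1$, then $X = f^{km}(X)$ for all $k \ge 1$; choosing $k$ large enough so that $km \ge q_0$ forces the right-hand side to be $D$-partite, and hence so is $X$. The main obstacle is the combinatorial claim underlying the key lemma, namely that closed-walk lengths at any vertex realise every sufficiently large multiple of $l$; everything else amounts to bookkeeping on the partition $V_0, \dots, V_{l-1}$ supplied by the loop-number lemma.
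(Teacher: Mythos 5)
Your proposal is correct and takes essentially the same route as the paper, whose own argument is only a sketch (with full details deferred to \cite{GH00}): the easy direction via $f^l(X) = X$, and the converse by showing that $f^q(Y)$ is eventually $D$-partite for large $q$, constructing walks of prescribed length by padding a connecting walk with closed walks whose lengths realise all sufficiently large multiples of $l$. Your numerical-semigroup (Sylvester--Frobenius) step is exactly the detail the paper glosses over with ``generalising this idea'', so the write-up fills in the intended proof rather than deviating from it.
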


The \Define{period} of a periodic point $X$ is the smallest $p \ge 1$ such that $f^p(X) = X$. We have seen that $f^{l(D)}(X) = X$, which immediately yields:

\begin{corollary}\cite{GH00,SLV10}
The period of a periodic point of $f$ divides $l(D)$. Conversely, for any $p \ | \ l(D)$, there is a periodic point of period $p$.
\end{corollary}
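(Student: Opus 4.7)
The plan is to bootstrap directly off the theorem, which identifies periodic points with $D$-partite sets and makes $f$ act on such a set merely by cyclically shifting the indices of the parts modulo $l = l(D)$.

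For the first statement, I would argue as follows. Let $X$ be a periodic point and write $X = V_{i_1} \cup \dots \cup V_{i_a}$ with indices in $\{0, 1, \dots, l-1\}$. Since $\outNeighbourhood(V_i) = V_{i+1 \bmod l}$, a quick induction on $k$ gives
\[
    f^k(X) = V_{i_1 + k} \cup \dots \cup V_{i_a + k} \pmod{l},
\]
so in particular $f^l(X) = X$. Now let $p$ be the period of $X$. The set $S = \{k \ge 1 : f^k(X) = X\}$ is closed under addition, contains both $p$ and $l$, and by minimality of $p$ every element of $S$ is a multiple of $p$ (Euclidean division: if $k \in S$ and $k = qp + r$ with $0 \le r < p$, then $f^r(X) = X$ forces $r = 0$). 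Applying this to $l \in S$ yields $p \mid l$.

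For the converse, given $p \mid l$, write $l = pk$ and set
\[
    X_p := \bigcup_{j=0}^{k-1} V_{jp}.
\]
This is $D$-partite, hence a periodic point by the theorem. Using the cyclic shift formula above, $f^p(X_p)$ has index set $\{p, 2p, \dots, kp\} \equiv \{p, 2p, \dots, (k-1)p, 0\} \pmod{l}$, which is exactly the original index set, so the period of $X_p$ divides $p$. To see equality, suppose $f^{p'}(X_p) = X_p$ for some $1 \le p' \le p$; then the index set is invariant under the shift by $p'$, and since $0$ is in it, so is $p'$. Thus $p' \in \{0, p, 2p, \dots, (k-1)p\}$, and the constraint $1 \le p' \le p$ forces $p' = p$.

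The argument is essentially routine once the theorem is in hand; the only step that requires any care is verifying that the period of the explicit set $X_p$ is exactly $p$ and not a proper divisor, and this is handled by the observation that the index set must contain $p'$ whenever it is shift-invariant with shift $p'$ and contains $0$. One small sanity check worth recording: the extremes $p = 1$ and $p = l$ correspond respectively to $X_1 = V_0 \cup \dots \cup V_{l-1}$, a fixed point of $f$, and $X_l = V_0$, whose orbit cycles through all $l$ parts.
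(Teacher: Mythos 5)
Your proposal is correct and follows the same route the paper takes: the paper simply observes that the $D$-partite characterisation gives $f^{l(D)}(X) = X$ and declares the corollary immediate (citing \cite{GH00,SLV10}), and your Euclidean-division argument plus the explicit witness $X_p = V_0 \cup V_p \cup \dots \cup V_{(k-1)p}$ just fills in the routine details, including the converse that the paper leaves implicit. The verification that $X_p$ has period exactly $p$ (via shift-invariance of the index set containing $0$) is sound, so nothing is missing.
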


In particular, if $D$ is primitive, then the only periodic points are $\emptyset$ and $[n]$, which are fixed points. More results can be obtained, for instance the time it takes to reach a periodic points can be upper bounded \cite{GH00} and the number of periodic points of a certain period can be determined \cite{SLV10}.

\subsubsection{Fixed points} \label{section:fixed_points}

We give a classification of the set of fixed points of a disjunctive network. In order to keep it simple, we focus on \Define{nontrivial} graphs, where every vertex belongs to a cycle.

The axioms of topology simplify greatly for finite spaces: a collection $T$ of subsets of $[n]$ is a \Define{topology} on $[n]$ if and only if $\emptyset, [n] \in T$ and
\[
    X,Y \in T \implies X \cup Y, X \cap Y \in T.
\]
Let $D = ([n],E)$ be a nontrivial graph and for any $i,j \in [n]$ denote $i \le j$ if there is a path from $i$ to $j$. We note that $\le$ is only a preorder relation, and that reflexivity is guaranteed by the fact that $D$ is nontrivial. For any $S \subseteq [n]$, let the \Define{up-set} of $S$ to be the set of vertices reachable from $S$:
\[
    S^\uparrow := \{ j : \exists i \in S, i \le j \}.
\]
It is easily checked that the collection of up-sets forms a topology. We denote this collection as
\[
    \Topology(D) := \{ S^\uparrow : S \subseteq [n] \}.
\]

\begin{theorem} \label{theorem:topology}
The set of fixed points of the disjunctive network on the nontrivial graph $D$ is $\Topology(D)$.
\end{theorem}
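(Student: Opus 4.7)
The plan is to use the identification $f(X) = \outNeighbourhood(X)$, so that $X$ is a fixed point if and only if $\outNeighbourhood(X) = X$, and then to show that this coincides exactly with $X = X^\uparrow$. A preliminary observation I would record is the alternative description $\Topology(D) = \{ X \subseteq [n] : X = X^\uparrow \}$: the inclusion $\subseteq$ follows from transitivity of the reachability preorder, which gives $(S^\uparrow)^\uparrow = S^\uparrow$, and $\supseteq$ is immediate by taking $S = X$. Nontriviality enters at the base level, ensuring $i \le i$ (via a cycle through $i$) and thus $X \subseteq X^\uparrow$ for every $X$.

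For the ``up-set implies fixed point'' direction, suppose $X = X^\uparrow$. If $j \in \outNeighbourhood(X)$ then $(i,j) \in E$ for some $i \in X$, so $i \le j$ and hence $j \in X^\uparrow = X$; this gives $\outNeighbourhood(X) \subseteq X$. Conversely, for any $i \in X$ I would invoke nontriviality: a cycle through $i$ provides a predecessor $k$ with $(k,i) \in E$, and since $i \le k$ along the remainder of the cycle, we get $k \in \{i\}^\uparrow \subseteq X^\uparrow = X$, so that $i \in \outNeighbourhood(X)$.

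For the converse, suppose $\outNeighbourhood(X) = X$. Reflexivity of $\le$ already yields $X \subseteq X^\uparrow$. For the inclusion $X^\uparrow \subseteq X$, take $j \in X^\uparrow$ together with a witnessing path $i = v_0 \to v_1 \to \cdots \to v_k = j$ starting at some $i \in X$. A short induction on $k$, applying $\outNeighbourhood(X) = X$ at each successive arc, places every $v_m$ in $X$, so in particular $j \in X$. Combining both directions gives $X$ is a fixed point iff $X = X^\uparrow$ iff $X \in \Topology(D)$.

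I do not expect any serious obstacle; the only conceptual point worth stressing is that nontriviality is genuinely used in both inclusions. Without it, a sink in $X$ could force $X \not\subseteq \outNeighbourhood(X)$ on the one hand, and a source would make $\le$ fail to be reflexive on the other, so the neat up-set characterisation would break down.
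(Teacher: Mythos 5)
Your proposal is correct and follows essentially the same route as the paper, which simply observes that $X$ is a fixed point iff $X = \outNeighbourhood(X)$ iff $X = S^\uparrow$ for some $S$. You merely spell out the details the paper leaves implicit (the equivalence $X = \outNeighbourhood(X) \iff X = X^\uparrow$ and the role of nontriviality in both inclusions), which is a faithful expansion rather than a different argument.
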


\begin{proof}
The subset $X$ is a fixed point of $f$ if and only if $X = \outNeighbourhood(X)$, that is $X = S^\uparrow$ for some $S$.
\end{proof}

In fact, any finite topology arises from a nontrivial graph; this well known result is usually given in terms of preorders (see \cite{Cam94}). 

\begin{lemma} \label{lemma:topology} \cite[Theorem 3.9.1]{Cam94}
Let $T$ be a topology on $[n]$, then $T = \Topology(D)$ for some nontrivial graph $D$ on $[n]$.
\end{lemma}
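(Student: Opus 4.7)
The plan is to exploit the well-known bijection between finite topologies and preorders on $[n]$ (Alexandrov's correspondence), and then to realise any preorder on $[n]$ as the reachability relation of a graph with a loop at every vertex.

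First, given the topology $T$, I would introduce the specialization preorder on $[n]$: declare $i \preceq j$ if and only if every $U \in T$ containing $i$ also contains $j$. Reflexivity and transitivity are immediate. The core content is to check that $T$ coincides with the collection of up-sets of $\preceq$. One direction is direct from the definition: if $U \in T$ and $i \in U$ with $i \preceq j$, then $j \in U$, so every open set is an up-set. For the converse I would use the finiteness of $[n]$: for each $i \in [n]$, the intersection $U_i := \bigcap\{U \in T : i \in U\}$ is a finite intersection of opens and hence lies in $T$; moreover $U_i$ is precisely the principal up-set $\{j : i \preceq j\}$. Any up-set $S$ can then be written as $S = \bigcup_{i \in S} U_i$, which is open since $T$ is closed under unions. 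Hence $T = \{S \subseteq [n] : S \text{ is an up-set for } \preceq\}$.

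Second, I would construct $D = ([n], E)$ by placing $(i,j) \in E$ exactly when $i \preceq j$. Reflexivity of $\preceq$ gives $(i,i) \in E$ for every $i$, so each vertex carries a loop, i.e.\ a cycle of length one, making $D$ nontrivial. Because $\preceq$ is transitive, the existence of a directed path from $i$ to $j$ in $D$ is equivalent to $i \preceq j$: a single edge witnesses $\preceq$, and conversely a path $i = i_0, i_1, \dots, i_k = j$ gives $i_s \preceq i_{s+1}$ for all $s$, hence $i \preceq j$. Therefore the preorder $\le$ from the paper's definition applied to $D$ agrees with $\preceq$, and the up-sets $S^{\uparrow}$ appearing in the definition of $\Topology(D)$ are exactly the up-sets of $\preceq$. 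Combining with the first step, $\Topology(D) = T$.

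The only mildly delicate step is the verification that in a finite topology the open sets coincide with the up-sets of the specialization preorder, which hinges on the fact that in the finite setting arbitrary (hence in particular all) intersections of opens are open, yielding the minimal open neighbourhoods $U_i$. Everything else is a direct unwinding of definitions, and the nontriviality of $D$ comes for free from the reflexivity of $\preceq$.
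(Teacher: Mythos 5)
Your proof is correct and follows exactly the route the paper has in mind: the paper does not prove this lemma but cites it (remarking that it is ``usually given in terms of preorders''), and your argument is precisely that standard one --- pass to the specialization preorder of $T$, use finiteness to identify the opens with the up-sets via the minimal neighbourhoods $U_i$, and realise the preorder as reachability in the reflexive graph $D$, whose loops make it nontrivial. No gaps; nothing further is needed.
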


The Knaster-Tarski theorem asserts that the set of fixed points of a monotone network forms a lattice. Conversely, for any lattice of configurations of $\B^n$, it is easy to construct a monotone network whose set of fixed points is exactly that lattice. Therefore, a subset of $\B^n$ is the set of fixed points of a monotone network if and only if it forms a lattice. We obtain a similar characterisation for the sets of fixed points of disjunctive networks.

\begin{corollary}  \label{corollary:topology}
$T$ is the set of fixed points of a disjunctive network on a nontrivial graph if and only if it is a topology.
\end{corollary}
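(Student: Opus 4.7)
The plan is to read the corollary as an immediate pairing of Theorem \ref{theorem:topology} with Lemma \ref{lemma:topology}: the former tells us that fixed-point sets of disjunctive networks on nontrivial graphs are always of the form $\Topology(D)$, and the latter tells us that every finite topology arises this way. The only content beyond citing these two results is the verification that each $\Topology(D)$ is indeed a topology, a check the text has already flagged as easy.

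For the forward implication, I would let $D$ be a nontrivial graph and $f$ the disjunctive network on $D$. Theorem \ref{theorem:topology} identifies the set of fixed points of $f$ with $\Topology(D)$. To conclude that this collection is a topology on $[n]$, it suffices to note that $\emptyset = \emptyset^\uparrow$ and $[n] = [n]^\uparrow$ lie in $\Topology(D)$, that $(S \cup S')^\uparrow = S^\uparrow \cup S'^\uparrow$ is immediate from the definition, and that $S^\uparrow \cap S'^\uparrow$ is again an up-set: if $x$ lies in both and $x \le y$, then $y$ is reachable from both $S$ and $S'$, so $y \in S^\uparrow \cap S'^\uparrow$. Since any up-set $U$ satisfies $U = U^\uparrow$ (using that $\le$ is reflexive on a nontrivial graph), this intersection belongs to $\Topology(D)$, and the forward direction is complete.

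For the reverse implication, suppose $T$ is any topology on $[n]$. Lemma \ref{lemma:topology} produces a nontrivial graph $D$ on $[n]$ such that $T = \Topology(D)$, and a second application of Theorem \ref{theorem:topology} then identifies $T$ with the set of fixed points of the disjunctive network on $D$. The main obstacle, such as it is, is entirely absorbed by Lemma \ref{lemma:topology}, which is the classical Alexandrov-type correspondence between preorders and finite topologies quoted from \cite{Cam94}; granting that lemma, the corollary collapses to a one-line combination in each direction, and the proof I write should therefore resist any temptation to reprove the preorder-topology correspondence and simply chain the two prior statements.
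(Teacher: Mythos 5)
Your proposal is correct and matches the paper's (implicit) argument exactly: the corollary is obtained by combining Theorem \ref{theorem:topology} with Lemma \ref{lemma:topology}, together with the easy check, which you carry out correctly, that the up-sets $\Topology(D)$ form a topology.
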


\subsection{Values of the image, periodic and fixed ranks} \label{subsection:ranks}

In this subsection, we consider the values that can be taken by the different ranks of a disjunctive network. Foremost, a Boolean network is \Define{idempotent} if $f^2 = f$, i.e. $f(f(x)) = f(x)$ for all $x \in \B^n$. It is clear that a Boolean network is idempotent if and only if its fixed rank, periodic rank and image rank are all equal. Idempotent disjunctive networks were characterised, under the guise of binary relations, by Rosenblatt \cite{Ros63} (see \cite{Kim72}).

The values the image/periodic/fixed rank can take for a monotone network are easy to characterise. Note that the Knaster-Tarski theorem implies that any monotone network has at least one fixed point.

\begin{proposition}
For any $n$ and any $1 \le k \le 2^n$, there exists an idempotent monotone network of dimension $n$ with exactly $k$ image points. 
\end{proposition}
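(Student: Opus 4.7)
The plan is to exhibit a monotone idempotent Boolean network by constructing a subset $L \subseteq \B^n$ with $|L| = k$ that will serve simultaneously as the image and the fixed-point set of the network.

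I would require $L$ to contain the all-one configuration $(1,\dots,1)$ and to be closed under componentwise conjunction, i.e.\ $x,y \in L \implies x \land y \in L$. Given such an $L$, define
\[
    f(x) = \bigwedge \{ y \in L : y \ge x \}.
\]
The set on the right is nonempty (it contains $(1,\dots,1)$) and closed under $\land$, so $f(x) \in L$ and $f$ is a well-defined map $\B^n \to L \subseteq \B^n$. Monotonicity is immediate since the set shrinks as $x$ grows. Moreover $f$ fixes every $y \in L$ (as $y$ is itself the $\land$-minimum of $\{z \in L : z \ge y\}$), so $f$ is idempotent and its image equals $L$, yielding image rank exactly $k$.

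The construction of $L$ proceeds by induction on $n$, with the base case $n=1$ handled by inspection ($L = \{1\}$ for $k=1$, $L = \B^1$ for $k=2$). For $n \ge 2$ and $1 \le k \le 2^n$, split according to whether $k \le 2^{n-1}$ or $k > 2^{n-1}$. If $k \le 2^{n-1}$, apply the induction hypothesis to obtain a $\land$-closed $C \subseteq \B^{n-1}$ containing $(1,\dots,1)$ with $|C|=k$, and take $L = C \times \{1\}$. If $k > 2^{n-1}$, set $m = k - 2^{n-1} \in [1, 2^{n-1}]$, apply induction at $m$ to obtain the corresponding $C$, and take
\[
    L = \bigl(\B^{n-1} \times \{0\}\bigr) \cup \bigl(C \times \{1\}\bigr).
\]
A short case check on the last coordinate shows $L$ is $\land$-closed, and we have $|L| = 2^{n-1} + m = k$ and $(1,\dots,1) \in L$.

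The main conceptual obstacle to avoid is the natural temptation to realise $L$ as an actual sublattice of $\B^n$, closed under both $\land$ and $\lor$; this attempt fails for some values of $k$---for example, checking each of the eight single-element removals shows that $\B^3$ has no sublattice of size $7$. The resolution is that only $\land$-closure is needed for the rounding map $f$ above to be well-defined: the $\lor$ in the induced lattice on $L$ is computed by $f$ itself, a much weaker requirement that can always be met.
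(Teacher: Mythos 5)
Your proof is correct, but it takes a genuinely different route from the paper's. The paper argues directly: it lists the configurations of $\B^n$ in a linear extension of the componentwise order (non-decreasing Hamming weight), fixes the first $k-1$ configurations, and sends everything else to $(1,\dots,1)$; monotonicity and idempotence are then immediate from the linear-extension property, with no auxiliary construction needed. You instead build a $\land$-closed family $L$ of size $k$ containing the top (by induction on $n$) and take the associated closure-type operator $f(x) = \bigwedge\{y \in L : y \ge x\}$. Your verification is sound: $L$ being a finite meet-closed family with top makes $f$ well defined with image exactly $L$, monotone, and idempotent, and the inductive splitting $L = C\times\{1\}$ versus $L = (\B^{n-1}\times\{0\}) \cup (C\times\{1\})$ does preserve $\land$-closure and the cardinality count. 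What your approach buys is structural insight: it shows that every meet-closed family containing $(1,\dots,1)$ (a closure system) is realisable as the image and fixed-point set of an idempotent monotone network, which connects nicely to the Knaster--Tarski discussion in Section 3.1.3, and your remark that $\land$-closure suffices where full sublattice closure can fail (e.g.\ no $7$-element sublattice of $\B^3$) is accurate and well taken. What the paper's approach buys is brevity: one explicit two-case formula, no induction, and no closure machinery.
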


\begin{proof}
Sort the configurations of $\B^n$ in non-decreasing order of Hamming weight, so that 
$x^0 = (0, \dots, 0), \dots, x^{2^n-1} = (1, \dots, 1)$ and $x^i \le x^j$ implies $i \le j$. Let $f$ be defined as
\[
    f(x^a) = \begin{cases}
    x^a         &\text{if } 0 \le a \le k-2\\
    x^{2^n-1}   &\text{if } k-1 \le a \le 2^{n-1}.
    \end{cases}
\]
Then $f$ is idempotent, monotone and has image/periodic/fixed rank $k$. 
\end{proof}

On the other hand, the situation for disjunctive networks is more complex. We shall obtain some results on the values the image/periodic/fixed rank of a disjunctive network can take but we will remain far from classifying them.

\begin{problem}
What is the complexity of the following problem: given $k$ and $n$, is there a disjunctive network of dimension $n$ with image rank $k$? 
\end{problem}

First, we consider the maximum value of the image/periodic/fixed ranks. Clearly, it is given by $2^n$, but we can even classify the disjunctive networks with image/periodic rank $2^n$. (There is only one Boolean network with fixed rank $2^n$, namely the identity, which happens to be disjunctive.) In fact, the only bijective monotone networks are \Define{permutations of variables}. This classification is folklore, and is related to the classification of isometries of the hypercube. The symmetric group on $[n]$ acts naturally on configurations in $\B^n$, whereby  $\pi(x) = ( x_{\pi(1)}, \dots, x_{\pi(n)} )$ for any permutation $\pi$ of $[n]$. Any such Boolean network is called a permutation of variables. Note that permutations of variables are disjunctive networks: they correspond to interaction graphs that are disjoint unions of cycles.

\begin{theorem} \label{theorem:permutation}
A monotone Boolean network is bijective if and only if it is a permutation of variables.
\end{theorem}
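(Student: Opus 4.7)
The easy direction is that any permutation of variables is manifestly a bijection and is monotone; the interesting direction is that every monotone bijection $f : \B^n \to \B^n$ must permute coordinates. The plan is to pin down $f$ on the extremal configurations first, then on the unit configurations $e^i$, and finally to extend to arbitrary $x$ using monotonicity together with a Hamming-weight-preservation argument.

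First I would observe that $f(0, \dots, 0) \le f(x)$ for every $x$ by monotonicity, and since $f$ is surjective onto $\B^n$ this forces $f(0, \dots, 0) = (0, \dots, 0)$; dually, $f(1, \dots, 1) = (1, \dots, 1)$. Next I would prove that $f$ preserves Hamming weight. Along any maximal chain $0 = x^0 < x^1 < \dots < x^n = (1, \dots, 1)$ with $\wH(x^k) = k$, the images $f(x^0), \dots, f(x^n)$ are weakly increasing (monotonicity) and pairwise distinct (injectivity), hence form a strict chain of length $n+1$ from $(0,\dots,0)$ to $(1,\dots,1)$ in $\B^n$. The only such chains are the maximal ones, so $\wH(f(x^k)) = k$; since every configuration lies in some maximal chain, $\wH(f(x)) = \wH(x)$ for every $x$.

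Specialising to a unit configuration yields $f(e^i) = e^{\pi(i)}$ for some map $\pi : [n] \to [n]$, and injectivity of $f$ makes $\pi$ a permutation. For arbitrary $x$, monotonicity gives $f(x) \ge f(e^i) = e^{\pi(i)}$ for every $i \in \supp(x)$, so $f(x) \ge \bigvee_{i \in \supp(x)} e^{\pi(i)}$; the right-hand side has weight $\wH(x) = \wH(f(x))$, so the inequality is an equality. Decoded, this means $f(x)_{\pi(i)} = x_i$ for every $i$, i.e.\ $f$ is precisely the permutation of variables associated to $\pi^{-1}$. The only nontrivial ingredient is the weight-preservation step, and it boils down to the combinatorial fact that a strict chain of length $n+1$ in $\B^n$ must hit every weight level exactly once; everything else is a direct application of monotonicity and bijectivity.
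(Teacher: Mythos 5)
Your proof is correct and takes essentially the same route as the paper: both hinge on proving weight preservation via maximal chains (monotonicity plus injectivity forcing a strict chain of length $n+1$) and then reading the permutation off the unit configurations. The only difference is cosmetic: where the paper inducts on Hamming weight, splitting off two coordinates at a time, you conclude in one step from $f(x) \ge \bigvee_{i \in \supp(x)} f(e^i)$ together with the weight equality, which is a slightly more direct finish of the same argument.
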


\begin{proof}
Let $f$ be a bijective monotone Boolean network. We first prove that $f$ preserves the Hamming weight. Any $x \in \B^n$ of Hamming weight $k$ belongs to a maximal chain 
\[
    x^0 = (0, \dots, 0) < x^1 < \dots < x^k = x < \dots < x^n = (1, \dots, 1),
\]
where $\wH(x^i) = i$ for all $0 \le i \le n$. By monotonicity and injectivity, we have
\[
    f(x^0) < f(x^1) < \dots < f(x^n),
\]
whence $\wH(f(x^i)) = i$ for all $0 \le i \le n$, and in particular $\wH(f(x)) = x$. 

We now prove that $f(x) = \pi(x)$ for some permutation $\pi$, by induction on $k = \wH(x)$. The base cases $k = 0$ and $k=1$ are clear, so let us assume $k \ge 2$ and that it holds for up to $k-1$. Suppose $x_i = x_j = 1$, and denote $x^{-i} = (0, x_{-i})$ and $x^{-j} = (0, x_{-j})$. We have
\[
    f(x) \ge f(x^{-i}) \lor f(x^{-j}) = \pi(x^{-i}) \lor \pi(x^{-j}) = \pi(x).
\]
Since $f(x)$ and $\pi(x)$ both have Hamming weight $k$, we have equality: $f(x) = \pi(x)$.
\end{proof}

We now move on to singular networks, i.e. those that are not bijective. Even though there are monotone Boolean networks of image rank $k$ for all $1 \le k \le 2^n$, and in particular for $k = 2^n - 1$, the image rank of singular disjunctive networks is upper bounded by $3/4 \cdot 2^n$. We fully classify the graphs that reach the upper bound in Theorem \ref{theorem:near-bijective} below. The classification is based on the following three families of connected graphs:
\begin{enumerate}
    \item
    $C_n$ ($n \ge 1$) is the cycle on $n$ vertices, with vertex set $V = \Z_n$ and arcs $E = \{ (i, i+1 \mod n) : i \in \Z_n \}$ for $n \ge 2$ and $E = \{(0,0)\}$ for $n=1$.

    \item
    $A_{p,q}$ ($0 \le q \le p-1$) is the chorded cycle: the cycle $C_p$, augmented by the chord $\{(0, q)\}$.
    
    \item
    $B_{s,t}$ ($s,t \ge 1$) is the link of cycles: formed of two cycles $C_s$ and $C_t$, with a single arc from $C_s$ to $C_t$. We denote the vertices of $C_s$ as $0, \dots, s-1$ and those of $C_t$ as $\overline{0}, \dots, \overline{t-1}$.
\end{enumerate}
Examples of these graphs ($C_4$, $A_{6,2}$ and $B_{1,3}$) are displayed on Figure \ref{figure:graphs} . We then say that a graph is \Define{near-cyclic} if one of its connected components is a chorded cycle or a link of cycles, and all other connected components are cycles. More formally, $D$ is near-cyclic if it is of the form $D = A_{p,q} \cup C_{n_1} \cup \dots \cup C_{n_c}$ or $D = B_{s,t} \cup C_{n_1} \cup \dots \cup C_{n_c}$ for some choice of parameters.

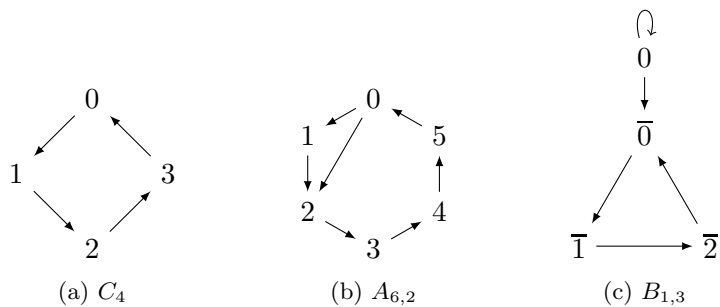
\begin{figure}
    \centering
    \subfloat[$C_4$]{
    \begin{tikzpicture}
        
        \foreach \x in {0,...,3}{
            \node (\x) at (90 + \x * 90:1) {\x};
        }

        \draw[-latex] (0) edge (1) (1) edge (2) (2) edge (3) (3) edge (0);
    \end{tikzpicture}
    } \hspace{1cm}
    \subfloat[$A_{6,2}$]{
    \begin{tikzpicture}
        \foreach \x in {0,...,5}{
            \node (\x) at (90 + \x * 60:1) {\x};
        }
        
        \draw[-latex] (0) edge (1) (1) edge (2) (2) edge (3) (3) edge (4) (4) edge (5) (5) edge (0) (0) edge (2);
            
    \end{tikzpicture}
    } \hspace{1cm}
    \subfloat[$B_{1,3}$]{
    \begin{tikzpicture}
        \foreach \x in {0,1,2}{
            \node (\x) at (90 + \x * 120:1) {$\overline{\x}$};
        }
        
        \node (v) at (0,2) {$0$};
        \draw[-latex] (v) edge [loop above] (v) (v) edge (0);
        
        \draw[-latex] (0) edge (1) (1) edge (2) (2) edge (0);
            
    \end{tikzpicture}
    }
    \caption{Graphs used in Theorem \ref{theorem:near-bijective}}
    \label{figure:graphs}
\end{figure}

\begin{theorem} \label{theorem:near-bijective}
For $n \ge 2$, the maximum image rank of a singular disjunctive network of dimension $n$ is $3/4 \cdot n$, and it is reached if and only if its interaction graph is near-cyclic.
\end{theorem}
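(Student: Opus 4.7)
My plan is to reduce to weakly connected components and then classify the connected singular graphs achieving the maximum. First, because $f_j$ depends only on the in-neighbours of $j$, the image rank is multiplicative over the weakly connected components of $D$: if $D = D_1 \sqcup \dots \sqcup D_c$, then $|\mathrm{Im}(f)| = \prod_{i} |\mathrm{Im}(f_i)|$. By Theorem \ref{theorem:permutation}, a component contributes the maximum $2^{|D_i|}$ precisely when it is a cycle. Hence the whole theorem reduces to the key lemma: for any connected digraph $D'$ on $m \ge 2$ vertices that is not a cycle, $|\mathrm{Im}(f_{D'})| \le 3 \cdot 2^{m-2}$, with equality if and only if $D' \cong A_{p,q}$ for some $q \ne 1$ or $D' \cong B_{s,t}$. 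Once this is established, for singular $f$ at least one $D_i$ is not a cycle, which gives the bound $3 \cdot 2^{n-2}$; equality forces exactly one component to be $A_{p,q}$ or $B_{s,t}$ with the rest cycles, i.e. $D$ near-cyclic.

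The equality side of the key lemma is a direct computation. In both $A_{p, q}$ and $B_{s, t}$, the unique extra arc beyond the cyclic backbone produces exactly one inequality of the form $y_v \ge y_k$ on the image, and exactly $3 \cdot 2^{m-2}$ of the $2^m$ configurations in $\B^m$ satisfy it.

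For the upper bound, I case-split on structural features of $D'$. If $D'$ has a source (some $f_j \equiv 0$), a sink (some $x_i$ does not appear in $f$), or two vertices with identical in-neighbourhoods $T_{j_1} = T_{j_2}$ (forcing $y_{j_1} = y_{j_2}$ identically), then the image lies inside a proper coordinate subcube of size at most $2^{m-1} < 3 \cdot 2^{m-2}$. Otherwise the $T_j = \inNeighbourhood(j)$ are pairwise distinct non-empty subsets of $[m]$, so $|E| = \sum_j |T_j| \ge m+1$ (equality would give $D' = C_m$, contradicting the assumption). When $|E| = m + 1$, exactly one vertex $v$ has $|T_v| = 2$; the singleton predecessor map $\tau : [m] \setminus \{v\} \to [m]$ is injective and misses a unique vertex $w$, which must lie in $T_v$ (otherwise $w$ would be a sink). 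Extending $\tau$ by $\sigma(v) := w$ yields a permutation of $[m]$; its cycle decomposition, with the other in-arc of $v$ added as a single extra arc, recovers $D'$. Connectedness then forces $D' \cong A_{p,q}$ (chord within one cycle) or $D' \cong B_{s,t}$ (bridge between two cycles).

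The main obstacle is showing strict inequality $|\mathrm{Im}(f_{D'})| < 3 \cdot 2^{m-2}$ when the excess $e := |E| - m$ is at least $2$. My plan is to distinguish the subcase "some vertex has in-degree $\ge 3$" from "at least two vertices have in-degree exactly $2$", and count directly: in the first, the constraint $y_v = \bigvee_{u \in T_v} x_u$ with $|T_v| \ge 3$, paired with the singleton coordinates indexed by $T_v$, restricts the image below $3 \cdot 2^{m - 2}$; in the second, two inequalities $y_{v_1} \ge y_{k_1}$ and $y_{v_2} \ge y_{k_2}$ (with additional care when the $k_i$'s or $v_i$'s coincide) are jointly satisfied by strictly fewer than $3 \cdot 2^{m - 2}$ outputs. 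A cleaner alternative I would try is induction on $|E|$: deleting one excess in-arc reduces to a graph with excess $e - 1$; if the reduced graph is degenerate (source, sink, or repeated $T_j$) we are done, otherwise the removed arc contributes a genuinely new constraint on top of the constraints of the reduced graph, producing the strict inequality.
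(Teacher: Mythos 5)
Your reduction to weakly connected components, your computation that $A_{p,q}$ (with $q\neq 1$) and $B_{s,t}$ have image exactly $\{y : y_v \ge y_k\}$ of size $3\cdot 2^{m-2}$, and your reconstruction of these graphs from ``no source, no sink, pairwise distinct in-neighbourhoods, $|E|=m+1$'' are all sound; this structural step is in fact a different route from the paper, which instead invokes an external result (graphs supporting image rank above $2^{n-1}$ are coverable by vertex-disjoint cycles) and then classifies the permuted versions of the reflexive graph $G_n$. However, the case you yourself flag as the main obstacle --- excess $|E|-m\ge 2$ --- is a genuine gap, and your primary plan for it does not work. When a vertex $v$ has $\inNeighbourhood(v)=\{a,b\}$, an inequality $y_v\ge y_k$ valid on the whole image requires $a$ or $b$ to have an out-neighbour $k\ne v$ of in-degree one, and with two or more vertices of in-degree $2$ this can fail everywhere: take $m=3$ with $\inNeighbourhood(1)=\{2,3\}$, $\inNeighbourhood(2)=\{1,3\}$, $\inNeighbourhood(3)=\{1,2\}$; there is no source, no sink, no repeated in-neighbourhood, the image is $\{000,011,101,110,111\}$, and no inequality $y_v\ge y_k$ holds on it, so ``two inequalities jointly satisfied by fewer than $3\cdot 2^{m-2}$ outputs'' is simply not available. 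Your fallback induction on $|E|$ is also unjustified: deleting an arc changes the local function at its head, and the image ranks of the two networks are in general incomparable (adding a chord to $C_m$ \emph{decreases} the image rank from $2^m$ to $3\cdot 2^{m-2}$, while adding an arc into a source can increase it), so degeneracy or bounds for the reduced graph give no direct information about the original.

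What is needed in that case is a direct count of the kind the paper gives. If $i\ne j$ both have in-degree $2$, split on $t=|\inNeighbourhood(i)\cup\inNeighbourhood(j)|\in\{3,4\}$ (the case $t=2$ is your repeated-in-neighbourhood case): every input on which both disjunctions equal $1$ maps into $\{y: y_i=y_j=1\}$, of size $2^{m-2}$, while the remaining input patterns on those $t$ coordinates number $3$ (resp.\ $7$), giving image rank at most $\tfrac{5}{8}2^m$ (resp.\ $\tfrac{11}{16}2^m$), both strictly below $3\cdot 2^{m-2}$; combined with the estimate $|\mathrm{Im}(f)|\le |\{y:y_i=1\}|+|f_i^{-1}(0)|=2^{m-1}+2^{m-d}\le\tfrac{5}{8}2^m$ when some vertex has in-degree $d\ge 3$, this closes the excess-$\ge 2$ case. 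Note also that, as written, your proposal does not even record the non-strict bound $\le 3\cdot 2^{m-2}$ in that case (the displayed estimate with $d=2$ would already give it); so at present the excess-$\ge 2$ case carries neither the bound nor the strictness needed for your ``only if'' direction.
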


\begin{proof}
We first prove that the image rank is upper bounded by $3/4 \cdot 2^n$. Let $f$ be a singular disjunctive network and $D$ be its interaction graph. 

If all the vertices of $D$ have in-degree one, then it must have a sink $k$ (otherwise, $f$ would be a permutation of variables). But then, any two configurations only differing in position $k$ have the same image under $f$, and hence $|f(\B^n)| \le \frac{1}{2} 2^n$.

If $D$ has a vertex $i$ of in-degree $d \ge 2$, then $|f_i^{-1}(0)| = 2^{n-d}$, and hence
\begin{align*}
    | f(\B^n) | &\le |\{ y \in \B^n : y_i = 1 \}| + |f_i^{-1}(0)|\\
    &\le 2^{n-1} + 2^{n-d}\\
    &\le 3/4 \cdot 2^n.
\end{align*}

If $D$ has a source $i$, then $|f_i^{-1}(1)| = 0$, and we similarly obtain $|f(\B^n)| \le 1/2 \cdot 2^n$.

We now characterise the graphs that reach the upper bound. By the above, there is a vertex of in-degree $2$. We first prove that it is unique. Suppose for the sake of contradiction that both $i$ and $j$ have in-degree $2$. We do a case analysis based on $|\inNeighbourhood(i,j)|$. For the sake of simplicity, we write $\{ ab = 01, 10 \}$ as a shorthand for $\{ y \in \B^n : y_a y_b \in \{01, 10\} \}$ and we extend this shorthand notation to arbitrary sets of coordinates and values.
\begin{itemize}
    \item 
    $|\inNeighbourhood(i,j)| = 2$. Then $f_i = f_j$ and hence 
    \[
        |f(\B^n)| \le |\{ ij = 00, 11 \}| = \frac{1}{2} 2^n.
    \]
    
    \item
    $|\inNeighbourhood(i,j)| = 3$. Say $\inNeighbourhood(i) = \{a,b\}$ and $\inNeighbourhood(j) = \{a,c\}$. Then 
    \begin{align*}
        |f(\B^n)| &= |f(\{ abc = 000,001,010 \})| + |f( \{ abc = 011, 100, 101, 110, 111  \})| \\
        &\le |\{ abc = 000,001,010  \}| + |\{ ij = 11 \}|\\
        &= \frac{5}{8} 2^n. 
    \end{align*}
    
    \item
    $|\inNeighbourhood(i,j)| = 4$. Say $\inNeighbourhood(i) = \{a,b\}$ and $\inNeighbourhood(j) = \{c,d\}$. Then 
    \begin{align*}
        |f(\B^n)| &= |f(\{ abcd = 0000,0001,0010, 0011, 0100, 1000, 1100 \})| \\
        & \phantom{= } + |f(\{ abcd = 0101, 0110, 0111, 1001, 1010, 1011, 1101, 1110, 1111 \})|\\
        &\le  |\{ abcd = 0000,0001,0010, 0011, 0100, 1000, 1100 \}| + |\{ ij = 11 \}| \\
        &= \frac{11}{16} 2^n. 
    \end{align*}
\end{itemize}

We can now prove the result for reflexive graphs. A graph is \Define{reflexive} if $(i,i) \in E$ for all $i \in V$. The only reflexive graph with a unique vertex of in-degree $2$ is $G_n = B_{1,1} \cup C_1 \cup \dots \cup C_1$, displayed on Figure \ref{figure:G}; it is easy to check that the disjunctive network on $G_n$ indeed has image rank $3/4 \cdot 2^n$.

Let $G = (V,E)$ be a graph and $\pi$ be a permutation of $V$. We define $D = G \pi$ the graph with vertex set $V$ and arc set $\{ (i,\pi(j)) : (i,j) \in E \}$. In general, if a graph $D$ admits a Boolean network with image rank greater than $1/2 \cdot 2^n$, then it must be coverable by cycles \cite{Gad18a}, i.e. there exists a permutation $\pi$ of $[n]$ such that $D = G \pi$, where $G$ is reflexive. Thus, if $D$ is the interaction graph of a disjunctive network of image rank $3/4 \cdot 2^n$, then it is of the form $D = G_n \pi$ for some permutation $\pi$. If $\pi(1)$ and $\pi(2)$ belong to the same cycle of $\pi$, we obtain $D = A_{p,q} \cup C_{n_1} \cup \dots \cup C_{n_c}$; otherwise we obtain $D = B_{s,t} \cup C_{n_1} \cup \dots \cup C_{n_c}$.

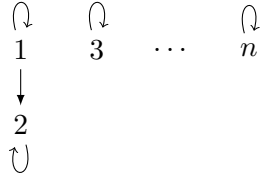
\begin{figure}
    \centering
    \begin{tikzpicture}
        \node (1) at (0,1) {$1$};
        \node (2) at (0,0) {$2$};
        \node (3) at (1,1) {$3$};
        \node (4) at (2,1) {$\dots$};
        \node (n) at (3,1) {$n$};
        
        \draw[-latex] (1) edge [loop above] (1)
        (2) edge [loop below] (2)
        (3) edge [loop above] (3)
        (n) edge [loop above] (n)
        (1) edge (2);
 
    \end{tikzpicture}
    \caption{The graph $G_n$}
    \label{figure:G}
\end{figure}

\end{proof}

We also obtain the corresponding result for the periodic rank.

\begin{corollary} \label{corollary:periodic_points}
For $n \ge 2$, the maximum periodic rank of a singular disjunctive network of dimension $n$ is $3/4 \cdot 2^n$, and reached if and only if the interaction graph is of the form $B_{1,1} \cup C_{n_1} \cup \dots \cup C_{n_c}$.
\end{corollary}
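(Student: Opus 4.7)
My plan is to combine Theorem \ref{theorem:near-bijective} with the trivial bound periodic rank $\le$ image rank. This immediately yields that the periodic rank of any singular disjunctive network is at most $3/4 \cdot 2^n$, with equality forcing the interaction graph $D$ to be near-cyclic: $D = D' \cup C_{n_1} \cup \dots \cup C_{n_c}$ for some $D' \in \{A_{p,q}, B_{s,t}\}$. Since the disjunctive network on each cycle $C_{n_i}$ is just a rotation, every configuration of $C_{n_i}$ is both periodic and an image point, and the dynamics on $D$ decomposes as a direct product. Hence $\text{periodic rank}(f) = \text{periodic rank}(f') \cdot \prod_i 2^{n_i}$ and $\text{image rank}(f) = 3/4 \cdot 2^{|D'|} \cdot \prod_i 2^{n_i}$, so the question reduces to determining for which $D'$ the disjunctive network $f'$ satisfies $\text{periodic rank}(f') = \text{image rank}(f') = 3/4 \cdot 2^{|D'|}$.

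The $A_{p,q}$ case can be dispatched quickly. Since $A_{p,q}$ is strongly connected, the characterization of periodic points from Section \ref{section:periodic_points} gives $\text{periodic rank}(f') = 2^{l(A_{p,q})}$, a power of two. But $\text{image rank}(f') = 3 \cdot 2^{p-2}$ is not a power of two for $p \ge 2$, so these cannot coincide.

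The main obstacle is the $B_{s,t}$ case with $(s,t) \ne (1,1)$; here I would exhibit an explicit image point of $f'$ that is not periodic. For $t \ge 2$, take the configuration $x$ with $x_0 = x_{\overline{0}} = 1$ and all other entries zero; the configuration $y$ with $y_{s-1} = y_{\overline{t-1}} = 1$ (and zeros elsewhere) satisfies $f(y) = x$, witnessing that $x$ is an image point. To show $x$ is not periodic, I would prove that the Hamming weight $W$ of the $C_t$-part is non-decreasing along orbits: a direct computation yields $W(f(y)) = (y_{\overline{t-1}} \lor y_0) + W(y) - y_{\overline{t-1}} \ge W(y)$, with strict inequality whenever $y_{\overline{t-1}} = 0$ and $y_0 = 1$. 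This applies at $y = x$, so $W(f(x)) \ge 2$, and monotonicity forces $W(f^k(x)) \ge 2 > 1 = W(x)$ for all $k \ge 1$. The sub-case $t = 1$, $s \ge 2$ is handled similarly: take $x$ with only $x_0 = 1$ (so $x_{\overline{0}} = 0$); the relation $f_{\overline{0}}(y) = y_{\overline{0}} \lor y_0$ shows that $f^k(x)_{\overline{0}}$ is non-decreasing in $k$ and reaches $1$ after one step, preventing recurrence.

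To finish, a direct inspection of $B_{1,1}$ on $\B^2$ shows that $(0,0), (0,1), (1,1)$ are fixed points while $(1,0) \mapsto (1,1)$ is not periodic, giving periodic rank $3 = 3/4 \cdot 2^2$ equal to the image rank. Combined with the product decomposition, this confirms that exactly the graphs of the form $B_{1,1} \cup C_{n_1} \cup \dots \cup C_{n_c}$ achieve the bound.
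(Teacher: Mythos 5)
Your proposal is correct, but it excludes the non-$B_{1,1}$ cases by a genuinely different route than the paper. The paper's proof stays inside the machinery of Theorem \ref{theorem:near-bijective}: it bounds the periodic rank of $f$ by the image rank of $f^2$ and observes that for every near-cyclic graph other than $B_{1,1} \cup C_{n_1} \cup \dots \cup C_{n_c}$, the interaction graph of $f^2$ has either two vertices of in-degree at least $2$ (the $A_{p,q}$ case and the $B_{s,t}$, $t \ge 2$ case) or a vertex of in-degree $3$ (the $B_{s,1}$, $s \ge 2$ case), so the in-degree case analysis from that theorem's proof forces the image rank of $f^2$ strictly below $3/4 \cdot 2^n$. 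You instead reduce to the special component via the product decomposition and then argue componentwise: for $A_{p,q}$ you invoke the characterisation of periodic points of strong graphs, so the periodic rank is the power of two $2^{l(A_{p,q})}$, which cannot equal $3 \cdot 2^{p-2}$; for $B_{s,t} \ne B_{1,1}$ you exhibit an explicit image point that is not periodic, using the monotonicity of the weight of the $C_t$-part along orbits. Both arguments are sound. The paper's is shorter and uniform, since it recycles an analysis already done and never needs the structure theorem for periodic points; yours is more explicit about \emph{why} periodicity fails and avoids passing to $f^2$, at the cost of a case-by-case construction. Two small points to tighten: in the $B_{s,1}$, $s \ge 2$ sub-case you assert but do not verify that $x = e^0$ is an image point (it is, since $f(e^{s-1}) = e^0$ when $s \ge 2$); and your identity $W(f(y)) = (y_{\overline{t-1}} \lor y_0) + W(y) - y_{\overline{t-1}}$ silently fixes the connecting arc of $B_{s,t}$ to be $(0,\overline{0})$, which is legitimate up to isomorphism but should be stated, since both your preimage construction (which switches on $y_{s-1}$) and the strict increase at $x$ (which uses $x_0 = 1$) depend on that labelling.
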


\begin{proof}
Let $f$ be a disjunctive network on the near-cyclic graph $D$, and let $D^2$ denote the interaction graph of $f^2$. If $D$ has an $A_{p,q}$ component, then $D^2$ has two vertices of in-degree at least two, namely $q$ and $q+1$. A similar argument holds for a $B_{s,t}$ component with $t \ge 2$. For a $B_{s,1}$ component with $s \ge 2$, the vertex $\overline{0}$ has in-degree $3$ in $D^2$. Therefore, in any case, the image rank of $f^2$ is less than $3/4 \cdot 2^n$, and so is the periodic rank of $f$.
\end{proof}

The corresponding result for the fixed rank immediately follows.

\begin{corollary}
For $n \ge 2$, the maximum fixed rank of a singular disjunctive network of dimension $n$ is $3/4 \cdot 2^n$, and reached if and only if the interaction graph is $G_n$.
\end{corollary}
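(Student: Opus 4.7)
The plan is to derive this corollary quickly from Corollary \ref{corollary:periodic_points} combined with a component-by-component analysis. Since every fixed point is a periodic point, the fixed rank is at most the periodic rank, which by Corollary \ref{corollary:periodic_points} is at most $3/4 \cdot 2^n$. Furthermore, if the fixed rank equals $3/4 \cdot 2^n$, then both the periodic and fixed ranks must achieve this maximum, which forces the interaction graph to be of the form $D = B_{1,1} \cup C_{n_1} \cup \dots \cup C_{n_c}$ and every periodic point to coincide with a fixed point.

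Next, I would observe that for a disjunctive network on a disjoint union of graphs, the dynamics factor: each $f_i$ only depends on its in-neighbours in $D$, all of which lie in the same component as $i$, so a configuration is fixed (respectively periodic) if and only if its restriction to each component is. Hence the fixed and periodic ranks factor as products across components, and equality between them must hold in every component separately. On $C_1$ the disjunctive network is the identity, so everything is fixed. On $C_{n_i}$ with $n_i \ge 2$, it is a cyclic shift, so any singleton gives a periodic orbit of length $n_i \ge 2$, producing periodic non-fixed points. On $B_{1,1}$, a direct inspection of the four subsets shows that $\emptyset$, $\{\overline{0}\}$, and $\{0, \overline{0}\}$ are fixed while $\{0\}$ is transient (mapping to $\{0, \overline{0}\}$), so every periodic point is fixed. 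Therefore the only way to keep every periodic point fixed is to enforce $n_1 = \dots = n_c = 1$, giving $D = G_n$.

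Finally, one checks that $G_n$ actually achieves the bound. By Theorem \ref{theorem:topology}, the fixed points of the disjunctive network on $G_n$ (which is nontrivial, as every vertex carries a self-loop) are exactly the up-sets of its reachability preorder; since $1 \le 2$ and all other pairs of distinct vertices are incomparable, a subset $X$ is an up-set if and only if $1 \in X \implies 2 \in X$, which yields $2^n - 2^{n-2} = 3/4 \cdot 2^n$ fixed points. The main obstacle is very mild: one only needs to justify the factoring of dynamics across components and carry out the direct check on $B_{1,1}$, after which the argument reduces to invoking Corollary \ref{corollary:periodic_points} and Theorem \ref{theorem:topology}.
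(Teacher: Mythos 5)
Your argument is correct and follows essentially the same route as the paper: bound the fixed rank by the periodic rank, invoke Corollary \ref{corollary:periodic_points} to restrict to graphs of the form $B_{1,1} \cup C_{n_1} \cup \dots \cup C_{n_c}$, observe that any cycle $C_{n_i}$ with $n_i \ge 2$ produces periodic non-fixed points, and verify that $G_n$ attains the bound. The only (harmless) difference is cosmetic: you count the fixed points of $G_n$ directly as up-sets via Theorem \ref{theorem:topology}, whereas the paper notes that the disjunctive network on $G_n$ is idempotent and appeals to its image rank.
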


\begin{proof}
If a disjunctive network is singular, then its fixed rank is at most $3/4 \cdot 2^n$ by Corollary \ref{corollary:periodic_points}. Therefore, we can restrict ourselves to disjunctive networks on near-cyclic graphs of the form $B_{1,1} \cup C_{n_1} \cup \dots \cup C_{n_c}$. It is clear that if any cycle $C_{n_i}$ has $n_i \ge 2$, then the disjunctive network has periodic points that are not fixed points; conversely the disjunctive network on $G_n$ is idempotent.
\end{proof}

We next consider the opposite problem: what is the smallest ``missing value'' of the image/periodic/fixed rank? Obviously, we consider a nonzero fixed rank.

\begin{problem}
Given $n$, what is the minimum $k \ge 1$ such that there is no disjunctive network of dimension $n$ and image/periodic/fixed rank $k$?
\end{problem}

We give a lower bound on that quantity below, for all three ranks.

\begin{theorem} \label{theorem:low_rank}
For any $n$, there exists an idempotent disjunctive network on $n$ vertices with image, periodic, and fixed ranks $r$ for all $1 \le r \le p - 1$, where $p$ is the smallest prime number greater than $n + 1$.
\end{theorem}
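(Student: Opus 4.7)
The plan is to restrict to disjunctive networks whose interaction graph $D = ([n], E)$ is reflexive and transitive (i.e., a preorder on $[n]$). Then $A_D^2 = A_D$, so the network is idempotent, and its image, periodic, and fixed ranks all coincide with $|\Topology(D)|$ by Theorem \ref{theorem:topology}. Since $|\Topology(D)|$ depends only on the quotient poset of the preorder, we may ``inflate'' any poset $P$ on $k \le n$ elements to a preorder on $[n]$ (by partitioning $[n]$ into $k$ equivalence classes) without changing the number of up-sets. Hence it suffices, for each target rank $r$, to realise $r$ up-sets on some poset with at most $n$ elements.

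For $r = 1$ I would take the empty graph on $[n]$, whose disjunctive network is the constant zero map and thus manifestly idempotent with a unique image/periodic/fixed point. For $r \ge 2$, write $r = q_1 q_2 \cdots q_k$ as a product of primes with multiplicity, and let $P$ be the disjoint union of $k$ chains, the $i$-th chain being of length $q_i - 1$. A chain of length $m$ has $m+1$ up-sets and up-sets of a disjoint union multiply, so $P$ realises exactly $\prod_i q_i = r$ up-sets using $s(r) := \sum_i (q_i - 1)$ vertices.

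The construction succeeds whenever $s(r) \le n$, which I would verify using Bertrand's postulate applied to $n+1$: this yields $p \le 2n + 2$ and hence $r \le p - 1 \le 2n + 1$. If $r$ is prime, then $r < p$ combined with the minimality of $p$ among primes exceeding $n+1$ forces $r \le n+1$, so $s(r) = r - 1 \le n$. If $r$ is composite, I would prove by induction on $r$ that $s(r) \le r/2$: the base case $r = 4$ gives $s(4) = 2 = 4/2$, and in the inductive step, writing $r = ab$ with $a, b \ge 2$ and applying the inductive hypothesis (or the prime bound $s(q) \le q-1$) to each factor, the desired inequality $s(a) + s(b) \le ab/2$ reduces to the elementary $(a-2)(b-2) \ge 0$. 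Since $s(r)$ is an integer and $s(r) \le r/2 \le n + 1/2$, we conclude $s(r) \le n$, and inflation produces the required preorder on $[n]$.

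The main obstacle is precisely the uniform bound $s(r) \le n$ over the range $r \le p-1$: without Bertrand's postulate one could not rule out values of $r$ requiring chains of total length exceeding $n$, and the saturated cases $r = 2q$ with $q$ prime sit at the very edge of what the postulate allows, which is exactly why the range is capped at $p - 1$. Once this bound is established, the idempotence of the constructed network and the count of its up-sets are both immediate.
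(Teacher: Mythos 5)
Your proof is correct and follows essentially the same strategy as the paper's: disjoint unions of reflexive transitive chains (the paper's transitive tournaments $T_a$) yield idempotent disjunctive networks whose image/periodic/fixed ranks multiply, and Bertrand's postulate together with factoring a composite $r$ (the same $(a-2)(b-2) \ge 0$ inequality) covers the whole range $1 \le r \le p-1$. The only cosmetic differences are that you use the full prime factorisation and padding by inflating equivalence classes, whereas the paper uses a two-factor decomposition $r = ab$ and pads with isolated (loopless) vertices.
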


\begin{proof}
The reflexive transitive tournament $T_a$ on $a$ vertices has arcs $ij$ for all $i \le j$. Let $f$ be the disjunctive network on $T_a$, then it is easily seen that $f$ is idempotent and that its image is $\{ \bigvee_{i=j}^a e^i : 1 \le j \le a+1 \}$.

We can now prove the result; it is clear for $n \le 2$ so we suppose $n \ge 3$. First, suppose $r \le n+1$. We denote the empty graph on $c$ vertices as $E_c$. Then the disjunctive network on $T_{r-1} \cup E_{n-r+1}$ is idempotent and has image rank $r$. Second, suppose $n + 2 \le r \le p-1$. Then by Bertrand's postulate, $p \le 2n - 1$ and hence $r \le 2n - 2$. Since $r$ is composite, we have $r = ab$ for $a + b \le 2 + r/2 \le n + 1$. Thus the disjunctive network on the graph $T_{a-1} \cup T_{b-1} \cup E_{n-a-b+2}$ is idempotent and has image rank $r$.
\end{proof}

\section{Outlook} \label{sec:conclusion}

\subsection{Disjunctive networks compared to other networks}


It would be interesting to compare the disjunctive network on $D$ with the other Boolean networks on $D$. Firstly, we want to investigate when the disjunctive network has as many image/periodic/fixed points as possible. There are three main upper bounds on the image/periodic/fixed rank of a Boolean network with interaction graph $D$, that depend on three graph parameters reviewed in \cite{Gad20}. The image rank of a Boolean network $f$ on $D$ is at most $2^{\alpha_1(D)}$ \cite{Gad18a}, while its periodic rank is at most $2^{\alpha_n(D)}$ \cite{Gad18a}, and its fixed rank is at most $2^{\tau(D)}$ (the famous feedback bound \cite{Rii06,Ara08}). Those bounds are not always reached (e.g. the pentagon does not reach any). The graphs where the feedback bound is reached by a monotone network are classified in \cite{ARS17}. Similar classification results for disjunctive networks seem close at hand.

\begin{problem}
Classify the graphs $D$ such that:
\begin{enumerate}
    \item the image rank of the disjunctive network on $D$ is $2^{\alpha_1(D)}$.
    
    \item the periodic rank of the disjunctive network on $D$ is $2^{\alpha_n(D)}$.

    \item the fixed rank of the disjunctive network on $D$ is $2^{\tau(D)}$.
\end{enumerate}
\end{problem}

Secondly, we want to investigate when the disjunctive network minimises the image/periodic/fixed rank. Since disjunctive networks are the closest to being constant by Theorem \ref{theorem:minimise_distance_to_constant}, one might expect that they should minimise the image rank over all networks with a given interaction graph. This is true in the following extreme cases, where the minimum rank is equal to $1$, $2$, or $2^n$ \cite{Gad20}. However, this turns out not to be the case in general: \cite[Theorem 5]{Gad20} gives a counter-example, where the minimum image rank is not achieved by the disjunctive network, but can be actually achieved by another monotone network. We therefore ask the following two questions.

\begin{problem}
Classify the graphs $D$ such that the disjunctive network on $D$ minimises the image rank over
\begin{enumerate}
    \item 
     all Boolean networks on $D$;
    
    \item
    all monotone networks on $D$. 
\end{enumerate}
\end{problem}

For periodic points, we do not know which graphs $D$ admit a Boolean network with a single periodic point (the so-called nilpotent networks) \cite{GR16}. As such it seems difficult to characterise the graphs where the disjunctive network minimises the periodic rank. For fixed points, on the other hand, the problem is straightforward. If $D$ is acyclic, then all the Boolean networks on $D$ have a unique fixed point by Robert's celebrated theorem \cite{Rob80}. Conversely, Aracena and Salinas (private communication) showed that any non-acyclic graph $D$ admits a fixed point free Boolean network.

\subsection{Generalisations}

We mention three possible avenues of generalising the scope of the current study of disjunctive networks. For each, it would be interesting to see how the results presented here might generalise to these broader classes of networks.
\begin{description}
    \item[Other Boolean networks] There are many classes of Boolean networks that contain, or are closely related to, disjunctive networks, e.g. AND-OR networks \cite{ADG04a}, AND-OR-NOT networks \cite{ARS14}, nested canalyzing networks \cite{KPST04}, threshold networks \cite{GO81,Gol85}, and of course monotone networks \cite{ADG04b}. 
    
    \item[Higher alphabets] The disjunction can be easily generalised to variables taking their values over a linearly ordered alphabet, by taking the maximum \cite{AMPV12}. This is not the only choice that maintains some of the desirable properties of Boolean disjunction; in fact, a thorough study and classification of disjunction functions in multiple valued logics is given in \cite{Got01}. 
    A different approach views the conjunction as the product: $x \land y = xy$ for all $x,y \in \B$ \cite{CLP05}. One can then consider so-called monomial dynamical systems over finite fields \cite{CJLS06}, where each local function is a product of variables; note that those networks are not monotone.
    
    \item[Infinite graphs] One can define the disjunction of any set of Boolean variables, whether finite or infinite. Considering disjunctive networks over infinite graphs would be another step to link Boolean networks and Cellular automata.
\end{description}


\end{document}